\pgfplotsset{compat = newest}
\newcommand{\Z}{\mathbb{Z}}
\newcommand{\R}{\mathbb{R}}
\newcommand{\from}{\colon}
\newcommand{\ep}{\varepsilon}
\newcommand{\la}{\lambda}
\newcommand{\lapneu}[1]{(-\Delta_{\mathrm{N}})^{#1}}
\newcommand{\into}{\int_{\Omega}}
\newcommand{\de}[1]{\mathrm{d} #1}
\newcommand{\scal}[2]{{\left( #1 , #2 \right)}}
\newcommand{\tM}{{\widetilde{\mathcal{M}}}}
\def\XXint#1#2#3{{\setbox0=\hbox{$#1{#2#3}{\int}$ }
\vcenter{\hbox{$#2#3$ }}\kern-.6\wd0}}
\DeclareMathOperator{\diverg}{div}
\DeclareMathOperator*{\pv}{P.V.}
\newtheorem{proposition}{Proposition}[section]
\newtheorem{theorem}[proposition]{Theorem}
\newtheorem{corollary}[proposition]{Corollary}
\newtheorem{lemma}[proposition]{Lemma}
\theoremstyle{definition}
\newtheorem{remark}[proposition]{Remark}
\newcommand{\beq}{\begin{equation}}
\newcommand{\eeq}{\end{equation}}
\newcommand{\ben}{\begin{enumerate}}
\newcommand{\een}{\end{enumerate}}
\newcommand{\bit}{\begin{itemize}}
\newcommand{\eit}{\end{itemize}}
\newcommand{\dys}{\displaystyle}
\begin{document}

\title[Optimization of the fractional principal eigenvalue]
{Best dispersal strategies in spatially heterogeneous environments: optimization of the principal eigenvalue for indefinite fractional Neumann problems}

\author{Benedetta Pellacci}
\address[B. Pellacci]{Dipartimento di Scienze e Tecnologie,
Universit\`a di Napoli ``Parthenope'', Centro Direzionale, Isola C4 80143 Napoli, Italy.}
\email{benedetta.pellacci@uniparthenope.it}
\author{Gianmaria Verzini}
\address[G. Verzini]{Dipartimento di Matematica, Politecnico di Milano, p.za Leonardo da Vinci 32,  20133 Milano, Italy}
\email{gianmaria.verzini@polimi.it}

\thanks{Research  partially
supported by  MIUR-PRIN-2012-74FYK7 Grant: ``Variational and perturbative aspects of nonlinear differential problems'';
by the ERC Advanced Grant  2013 n. 339958:
``Complex Patterns for Strongly Interacting Dynamical Systems - COMPAT'';
``Gruppo Nazionale per l'Analisi Matematica, la Probabilit\`a e le loro
Applicazioni'' (GNAMPA) of the Istituto Nazionale di Alta Matematica
(INdAM); Parthenope University Project ``Sostegno alla ricerca individuale  2015-2017 e 2016-2018".}

\subjclass[2010]{Primary: 35R11; Secondary: 35P15, 47A75, 92D25.}
\keywords{Spectral fractional Laplacian, reflecting barriers, survival threshold, periodic environments.}

\begin{abstract}
We study the positive principal eigenvalue
of a weighted  problem associated with the Neumann spectral
fractional Laplacian. This analysis is related to the investigation of the
survival threshold in population dynamics. Our  main result concerns
the optimization of such  threshold with respect to the fractional
order $s\in(0,1]$, the case $s=1$ corresponding to the standard Neumann
Laplacian: when the habitat is not too fragmented,
the principal positive eigenvalue can not have local minima for
$0<s<1$.
As a consequence, the best strategy for survival is either following the
diffusion with $s=1$ (i.e. Brownian diffusion), or with the lowest possible $s$ (i.e. diffusion allowing long jumps), depending on
the size of the domain.
In addition, we show that analogous results hold for the
standard fractional Laplacian in $\R^N$, in periodic environments.
\end{abstract}

\date{March 8, 2017}

\maketitle

\section{The Model}\label{sec:model}

Let $u=u(x,t)$ denote the density of a population in position $x$ at time $t$.
The common mathematical model \cite{MR1952568} for the evolution of $u$, in case it
undergoes some kind of dispersal, is given by a reaction-diffusion equation
\[
u_t + L u = f(x,u),
\]
on some spatial domain $\Omega\subset\R^N$, $N\ge1$, with suitable boundary conditions.
The internal reaction $f(x,u)$, which also takes into account the heterogeneity of
the habitat, can take various forms: for our purposes it is sufficient to consider
the simplest case, i.e.  that of a logistic nonlinearity
\[
f(x,u) = m(x) u - u^2,
\]
where the weight $m$ changes sign, distinguishing regions of either favorable or
hostile habitat. The diffusion operator is denoted by $L$, and in principle it
can incorporate a number
of different features of the model. Here, we consider linear,
homogeneous and
isotropic, but possibly non-local, operators.
If the individuals tend to move within the nearest neighborhoods, then
the spatial spread of $u$ is triggered by an underlying random walk of Brownian type,
and it is customary to choose $L=-K\Delta$, for some motility coefficient $K>0$. On
the other hand, in case the resources are sparse, it is expected that more elaborate
hunting strategies, allowing for long jumps, may favor the population survival. Actually,
this guess has also been supported by experimental studies
\cite{viswanathan1996levy,humphries2010environmental}. In this case
the underlying random walk is of Levy flight-type, rather than Brownian, and one is led
to consider fractional diffusion operators, where $-\Delta$ is replaced by $(-\Delta)^s$ \cite{MeKa,MR2584076}.

When  $0<s<1$ the fractional Laplacian in the entire space $\R^N$ can be defined in
different---but equivalent---ways \cite{MR2354493}: for instance via an integral
expression
\[
(-\Delta_{\R^N})^s u (x) = C_{N,s} \pv\int_{\R^N} \frac{u(x)-u(\xi)}{|x-\xi|^{N+2s}}\,\de\xi,
\]
for some dimensional constant $C_{N,s}$,
or as a pseudo-differential operator,  in terms of its Fourier transform:
\begin{equation}\label{eq:fraclap_RN}
\widehat{(-\Delta_{\R^N})^s u} (\xi) = |\xi|^{2s} \widehat{u}(\xi).
\end{equation}
When dealing with a bounded Lipschitz domain $\Omega\subset\R^N$,   the situation is more variegated and different, non-equivalent operators have
been proposed, in dependence of how the boundary
conditions are interpreted.
This complexity of the model is particularly evident
when dealing with Neumann, i.e. no flux boundary conditions,
see \cite{MR3217703, DiRoVa_2014}.
In this paper we consider the boundary as a ``reflecting barrier'',
namely  a barrier
that, in the discrete time counterpart, acts on the long jump
by means of an elastic reflection; this corresponds to the
so-called   ``mirror reflection'' case considered in \cite{MR3217703}.
Reasoning in terms of random walk and imposing the presence of a reflecting barrier on
$\partial\Omega$, one is led, at least heuristically, to consider the Neumann spectral fractional Laplacian, i.e.
\begin{equation}\label{def:lapneu}
\lapneu{s} u = \sum_{k=1}^{\infty} \mu_k^s \left(\int_\Omega u \phi_k\,dx\right)\phi_k,
\end{equation}
where $0=\mu_0<\mu_1\leq\mu_2\leq\dots$ denote the Neumann eigenvalues of $-\Delta$ in $H^1(\Omega)$, and
$(\phi_k)_k$ the corresponding normalized eigenfunctions. This operator has been considered
in different models and applications, see for instance \cite{cast,MR3385190,DiSoVa_2015}.
The relation between the Neumann spectral fractional Laplacian
and random walks with long jumps and reflections has been discussed in \cite{MoPeVe}, in dimension $N=1$, and
those arguments can be easily extended to higher dimensions in case $\Omega$ is a rectangle:
in fact, the correspondence holds true as far as the reflecting barrier can be treated by
the method of images, by introducing reflected domains in which the motion can be continued,
and then by quotienting by the symmetries.
Several other interpretations of the boundary as a reflecting barrier
are available in the literature: for instance, in \cite{MR3467345}
the barrier acts on the long jump by just stopping the particle at the
boundary, without any rebound;  in
\cite{DiRoVa_2014},  also the action of the boundary
is not deterministic.

Incidentally,  another established point of view is that of
dealing with a  periodically fragmented environment in $\R^{N}$ \cite{MR2214420,beroro,MR3477397,CaDiVa_2016}. Actually,
for our purposes, the treatment of the periodic model is very similar to
that with  mirror reflections.
Indeed, we can also deal with the fractional Laplacian on the
whole $\R^N$, instead
of the Neumann spectral one, by assuming that the environment is
periodic.

Another controversial feature of the model we are describing regards the form of the generalized diffusion coefficient:
a number of contributions deals with the difficulty of properly defining
(and measuring) the motility coefficient $K$  \cite{habeav,wube,veetal}.
Motivated  by dimensional arguments and modeling ones,
in this context $K$ is supposed to depend on $s$, and
a commonly accepted expression for it has been introduced in
\cite[Section 3.5]{MeKa} as
\[
K(s) = \frac{\sigma^{2s}}{\tau}
\]
(see also \cite{Ha,Ha2}), where the scales $\sigma$ and $\tau$ are respectively characteristic length and time associated
with the diffusion process. Without loss of generality, we
can choose via time scaling  $\tau=1$ and write $d=\sigma^2>0$. Summing up, we consider the equation
\begin{equation}\label{eq:diff_eq_neu}
\partial_t u + d^s \lapneu{s} u = m(x)u - u^2, \qquad x\in\Omega,
\end{equation}
for $0<s\leq 1$, where $s=1$ corresponds to the case of standard
(local) diffusion.
Alternatively, one may deal with a modified version of
\eqref{eq:diff_eq_neu}, in which the motility coefficient does not
depend on $s$:  we can also manage this case, as it actually can be
seen as a particular case of \eqref{eq:diff_eq_neu} when $d=1$, see
Section \ref{sec:mainres}.

A main question related to \eqref{eq:diff_eq_neu} concerns survival of the population, that is, the identification
of conditions (on $\Omega,s,d,m$) which imply that solutions to  \eqref{eq:diff_eq_neu} do not vanish asymptotically for
$t\to+\infty$. When $s=1$, it is well known that such conditions are related to the existence of a positive steady state, which attracts
every non-negative non-trivial solution. In turn, the existence of such steady state can be expressed in terms of the principal eigenvalue of the associated linearized problem \cite{MR1100011,MR1105497,MR2214420}.
These results can also be extended to the fractional setting \cite{beroro,kalosh,MoPeVe}.

Taking $m\in L^\infty(\Omega)$, two different
situations may occur in dependence on its average: if $m$
has non-negative average (and it is non-trivial) then there is always survival. On the other hand, in the case
\begin{equation}\label{ipom}
m \in \mathcal{M}:= \left\{ m \in L^\infty(\Omega):\int_\Omega m <0,\, m^+\not\equiv0\right\},
\end{equation}
the survival is related to the weighted eigenvalue problem
\begin{equation}\label{eq:eigen_u}
d^s \lapneu{s} u = \lambda m u, \qquad x\in\Omega.
\end{equation}
More precisely,  in Appendix \ref{app:eig} we show  that,
under condition \eqref{ipom},
there exists a unique positive principal eigenvalue
\[
\la_1=\la_1(m,d,s)>0,
\]
with a positive eigenfunction. Moreover, reasoning as in \cite[Theorem 1.2]{beroro}, one has
that solutions to \eqref{eq:diff_eq_neu} survive (i.e. they tend to the unique positive steady state, as $t\to+\infty$) if and only if $\lambda_1(m,d,s) < 1$.
Then, natural questions concern the dependence
of $\lambda_{1}$ on the parameters of the problem, and in particular its optimization.
Note that, through a  change of variables, rescaling the size of the domain is
equivalent to rescaling the diffusion coefficient $d$ while keeping $\Omega$ fixed. Here we
choose this second point of view, and this is the reason why we do not consider explicitly
the dependence of the eigenvalue on the domain.

In the case $s=1$ of standard diffusion, the dependence of $\lambda_1$ on $d$ can be easily
scaled out and the eigenvalue actually depends only on $m$. Accordingly, the problem of
minimizing $\lambda_1$
has been mainly considered, when $m$ varies
within a suitable admissible class,  see
\cite{MR1014659,MR1112065,MR1105497,MR1858877,loya,MR2364810,MR2494032,dego} and references
therein. The typical result obtained is that the minimizer $m$ exists and it is of
bang-bang type (i.e. it coincides with its maximum value $\overline m>0$ on some $D\subset
\Omega$, and with its minimum $-\underline m<0$
on $\Omega\setminus D$). Furthermore, the best environment has a few
number of relatively large favorable regions. As observed in \cite{MR1014659}, this has
significant implications for the design of wildlife refugees. Part of these results can also be
extended to the case $s<1$, as discussed in Section \ref{sec:opt_m} below, but our main interest in
the present paper is to analyze the properties of the map
\[
(m,d,s) \mapsto \lambda_1(m,d,s),
\]
aiming at optimizing $\lambda_1$, mainly with respect to $0<s\leq 1$. From a biological
viewpoint, this amounts to wonder whether, for given population and habitat, the Brownian hunting
strategy is more effective than the long jumps one, in order to survive. The good starting point
in our analysis is that the map $s\mapsto \lambda_1(m,d,s)$ is smooth in $(0,1]$ (see Appendix \ref{app:eig}).
Up to our knowledge,  there are very few contributions
concerning  the optimization  of the  order $s$
in fractional diffusion equations; in particular, a related but
different problem has been considered in \cite{SpVa_2016}.

It is worth noticing that  part of the cited above literature does not treat exactly problem
\eqref{eq:eigen_u} (with $s=1$), but rather the related version
\begin{equation}\label{eq:beresteigen}
-d\Delta u - m u = \tilde\lambda u, \qquad x\in\Omega.
\end{equation}
It is easy to show that $\lambda_1<1 $ if and only if
$\tilde\lambda_1<0$, therefore both these eigenvalues play
analogous roles for  survival. One main advantage of the latter problem is that a principal
eigenvalue $\tilde\lambda_1$ always exists, regardless of the average of $m$; on the other
hand, we prefer to deal with \eqref{eq:eigen_u}
because, among other properties, the dependence of $\lambda_1$
on $d$ can be treated in a simpler way.

As we mentioned, the case of the fractional laplacian on the full space, with a periodic environment, 
is of interest too.
More precisely,
following \cite{beroro}, let us introduce the hyperrectangle
$\mathcal{C}_{l}=(0,l_{1})\times
\cdots \times(0,l_{N})\subset\R^N$, and let us assume that
\[
m:\R^N\to\R\text{ is $\mathcal{C}_{l}$-periodic.}
\]
In case $m|_{\mathcal{C}_l}$ satisfies \eqref{ipom} (with $\Omega$
replaced by $\mathcal{C}_l$), we have the existence of a positive
principal eigenvalue $\lambda_{\mathrm{per}}=
\lambda_{\mathrm{per}}(m,d,s)$, with positive periodic
eigenfunction, for the problem
\begin{equation}\label{eq:period_eigen}
d^s (-\Delta_{\R^N})^s u = \lambda m u, \qquad x\in\mathbb{T}^N:=\R^N/\mathcal{C}_l.
\end{equation}
Moreover, the solutions to the problem
\[
\partial_t u + d^s (-\Delta_{\R^N})^s u = m(x)u - u^2, \qquad x\in\R^N,
\]
where no periodicity condition is assumed on $u$, survive if and only if
$\lambda_{\mathrm{per}}<1$. Actually, these results are proved in \cite{beroro} in terms
of the eigenvalue $\tilde\lambda_{\mathrm{per}}$ corresponding to \eqref{eq:beresteigen},
but the two conditions can be easily proved to be equivalent.
Now, it is easy to be convinced that
in some particular cases the Neumann spectral eigenvalue problem
\eqref{eq:eigen_u} and
the periodic one \eqref{eq:period_eigen} are equivalent. For instance, if
$m$ is defined
in a hyperrectangle $\Omega$, then one can extend it to $2\Omega$ by
even reflection, and
then to $\R^N$ by periodicity; hence, using the uniqueness properties of
the principal eigenfunctions, one can reduce the Neumann problem in
$\Omega$ to the periodic one; the opposite reduction can be done too, in
case $m$ is $\mathcal{C}_l$-periodic,
and even with respect to the directions of its sides (up to translations).
However, also for general $\Omega$ and $m$, the two problems
share the same structure.
Indeed, let
$\mathcal{C}_l$ be fixed and
let $(\nu_{k})_k$, $(\varphi_{k})_k$ denote the periodic eigenvalues
and eigenfunctions of $-\Delta$ in $\mathcal{C}_l$
(which can be explicitly computed).
Then we can introduce
the periodic spectral fractional Laplacian as
\begin{equation}\label{eq;periodic_fraclap}
(-\Delta_{\mathrm{per}})^{s} u = \sum_{k=1}^{\infty} \nu_k^s \left(\int_{\mathcal{C}_l}
u \varphi_k\,dx\right)\varphi_k.
\end{equation}
The following result (which is a version of Theorem A in \cite{ancora_Stinga}) allows to connect spectral operators with periodic ones.
\begin{proposition}\label{prop:periodic_is_spectral}
If $u$ is continuous and $\mathcal{C}_l$-periodic, then
\[
(-\Delta_{\R^{N}})^{s} u= (-\Delta_{\mathrm{per}})^{s}u, \qquad x\in\mathcal{C}_l.
\]
\end{proposition}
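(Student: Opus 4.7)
The plan is to reduce both fractional powers to the same underlying heat semigroup via the Bochner--Balakrishnan subordination formula
\[
(-\Delta)^{s}v=\frac{s}{\Gamma(1-s)}\int_{0}^{\infty}\bigl(v-e^{t\Delta}v\bigr)\,\frac{\mathrm{d}t}{t^{1+s}},
\]
which is valid for any nonnegative self-adjoint realisation of $-\Delta$. For $(-\Delta_{\mathrm{per}})^{s}$ this is just the scalar identity $\nu_{k}^{s}=\frac{s}{\Gamma(1-s)}\int_{0}^{\infty}(1-e^{-t\nu_{k}})t^{-1-s}\,\mathrm{d}t$ expanded in the basis $(\varphi_{k})$ appearing in \eqref{eq;periodic_fraclap}, while for $(-\Delta_{\R^{N}})^{s}$ it is the classical Gaussian-semigroup representation on $\R^{N}$, compatible with \eqref{eq:fraclap_RN}. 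Accepting both, the proposition reduces to showing that on $\mathcal{C}_{l}$-periodic continuous functions the two heat semigroups coincide.

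The key step is a method-of-images identity for the Gaussian kernel. Writing $G_{t}$ for the Euclidean heat kernel and using $\R^{N}=\bigsqcup_{k\in\Z^{N}}(\mathcal{C}_{l}+k\star l)$ with $k\star l:=(k_{1}l_{1},\ldots,k_{N}l_{N})$, the periodicity of $u$ yields
\[
(e^{t\Delta_{\R^{N}}}u)(x)=\int_{\mathcal{C}_{l}}\Bigl(\sum_{k\in\Z^{N}}G_{t}(x-y-k\star l)\Bigr)u(y)\,\mathrm{d}y.
\]
The periodised Gaussian in brackets is, by Poisson summation, precisely the integral kernel of $e^{t\Delta_{\mathrm{per}}}$: its Fourier coefficients in the basis $(\varphi_{k})$ equal $e^{-t\nu_{k}}$, which diagonalises the periodic heat semigroup. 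Hence $e^{t\Delta_{\R^{N}}}u=e^{t\Delta_{\mathrm{per}}}u$ on $\mathcal{C}_{l}$ for every $t>0$, and substituting into the subordination formula and invoking Fubini on the double integral proves the statement.

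The main delicate point is the legitimacy of the Bochner integral representation for $(-\Delta_{\R^{N}})^{s}u$ when $u$ is only continuous and bounded (and hence not in $L^{2}(\R^{N})$): near $t=0^{+}$ the integrand $u-e^{t\Delta_{\R^{N}}}u$ must decay quickly enough to offset the singular weight $t^{-1-s}$, which is fine for smooth $u$ but only gives a modulus-of-continuity estimate in the continuous case. The cleanest resolution is to prove the identity first for smooth $\mathcal{C}_{l}$-periodic $u$, where both sides are absolutely convergent and the spectral calculus is transparent (alternatively, one can bypass Bochner entirely and verify the single-character identity $(-\Delta_{\R^{N}})^{s}\varphi_{k}=\nu_{k}^{s}\varphi_{k}$ via \eqref{eq:fraclap_RN}, then sum over $k$), and finally to extend to continuous $u$ by periodic mollification, passing to the limit against a smooth test function on $\mathcal{C}_{l}$ and using the continuity of both operators from $L^{\infty}(\mathcal{C}_{l})$ into distributions.
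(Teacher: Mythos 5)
Your argument is correct, but it follows a genuinely different route from the paper. The paper's proof is a direct distributional Fourier computation: a continuous $\mathcal{C}_l$-periodic $u$ is a tempered distribution whose Fourier transform is the lattice sum $\sum_{k}u_k\delta_N(\xi-k)$, so applying the multiplier $|\xi|^{2s}$ of \eqref{eq:fraclap_RN} simply replaces each coefficient $u_k$ by $|k|^{2s}u_k=\nu_k^s u_k$, which is exactly \eqref{eq;periodic_fraclap}. Your subordination route instead identifies the two operators at the level of the heat semigroups, using the method of images and Poisson summation to show that the periodised Gaussian is the kernel of $e^{t\Delta_{\mathrm{per}}}$, and then pushes the identity through the Balakrishnan integral. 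What this buys is generality and structure: the same argument transfers any subordinated function of the Laplacian (not just the power $s$) from $\R^N$ to the torus, and it makes the probabilistic content (a reflected/periodised L\'evy flight) visible. What it costs is exactly the delicate point you flag: the Balakrishnan integral for $(-\Delta_{\R^N})^s$ is not obviously convergent near $t=0^+$ for a merely continuous bounded $u$, so you must first work with smooth periodic functions and then pass to the limit distributionally; your parenthetical alternative --- verifying $(-\Delta_{\R^N})^s\varphi_k=\nu_k^s\varphi_k$ character by character and summing --- is in fact the paper's proof in disguise, and is the shortest way to close that gap. Either resolution is sound, so the proof stands.
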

As a consequence, our techniques apply to this setting too.
\smallskip

The paper is structured as follows. In the next section we present our
main results about the optimization of $\lambda_1$ with respect to $s$,
joint with their biological interpretations.
The proofs of such results are given in Section \ref{preliminari}.
In Section \ref{sec:opt_m} we briefly discuss the optimization of $\lambda_1$ with respect to $m$ and in Section \ref{sec:final}
we provide some discussion, including a recap of the biological and mathematical significance of our results, as well as some numerical simulations to better illustrate our
assumptions.
We postpone the proofs of the existence and regularity properties of eigenvalues and eigenfunctions associated with \eqref{eq:eigen_u}
in Appendix \ref{app:eig} and we collect the proofs of  some
technical results exploited in the paper in Appendix \ref{app:aux}.

\subsection*{Acknowledgments}
We would like to thank Alessandro Zilio for his precious help with the numerical simulations, and Marco Fuhrman and Sandro Salsa
for the fruitful discussions.

\subsection*{Notation.} We write $\scal{\cdot}{\cdot}$ for the
scalar product in $L^2(\Omega)$.
We will denote with $\phi_{k}$ the eigenfunctions of the classical
Laplace operator  in $\Omega$ with Neumann homogeneous boundary
conditions, normalized in $L^2(\Omega)$. Their associated eigenvalues will be denoted by
$\mu_{k}$. For a function $u\in L^2(\Omega)$, we write
\begin{equation}\label{not:muk}
u = u_0 + \sum_{k=1}^{\infty} u_k \phi_k,
\qquad\text{
where }
u_0 = \frac{1}{|\Omega|}\int_\Omega u,\quad
u_k = \scal{u}{\phi_k},\,k\geq 1,
\end{equation}
where $|\Omega|$ stands for the Lebesgue measure of the set
$\Omega$.
Often we will write $u=u_0 + \tilde u$.
Finally, $C$ denotes every (positive) constant we do not need  to specify, whose value may change, 
also within the same formula.

\section{Main Results }\label{sec:mainres}
Throughout all the paper we will assume, up to further restrictions, $\Omega\subset\R^N$ to be a bounded, Lipschitz domain, $d>0$, $0<s\leq 1$, and $m\in\mathcal{M}$, defined in \eqref{ipom}.
The starting observation in our optimization results is the scaling property
\[
\lambda_1(m,d,s) = d^s \lambda_1(m,1,s),
\]
which allows us to prove that, if $d$ is very large or very small, with respect to
the size of $\Omega$, then the map $s\mapsto
\lambda_1(m,d,s)$ becomes  monotone, and therefore it is minimized either for $s=1$ or for
$s$ small. More precisely, recalling that $\mu_1>\mu_{0}=0$ denotes the first positive Neumann eigenvalue
of $-\Delta$ in $H^1(\Omega)$, we have the following.
\begin{theorem}\label{prop:dep_on_d}
Let $\Omega$ and $m$ be fixed and satisfying \eqref{ipom}. Then:
\begin{itemize}
 \item if $d\geq \dfrac{1}{\mu_1}$ then the map $s \mapsto \lambda_1(m,d,s)$ is monotone
 increasing in $(0,1]$;
 \item for any $0<a<1$ there exists $\underline{d}>0$, depending only on $a$, $\Omega$
 and $m$,
 such that if $d\leq\underline{d}$ then the map $s \mapsto \lambda_1(m,d,s)$ is monotone decreasing in $[a,1]$.
\end{itemize}
\end{theorem}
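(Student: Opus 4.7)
The plan is to combine two ingredients that are both settled in Appendix \ref{app:eig}: the scaling identity $\la_1(m,d,s)=d^{s}\Lambda(s)$, with $\Lambda(s):=\la_1(m,1,s)$, and the Rayleigh characterization associated with the spectral decomposition \eqref{def:lapneu}. Writing $u=\sum_{k\ge 0}c_k\phi_k$ with $c_k:=\into u\phi_k$, I would express
\[
\la_1(m,d,s)=\inf\frac{\sum_{k\ge 1}(d\mu_k)^{s}\,c_k^{2}}{\into m u^{2}},
\]
where the infimum is taken over admissible $u$ (those in the form domain with $\into m u^{2}>0$) and is attained at the positive principal eigenfunction. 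The $k=0$ mode drops out since $\mu_0=0$, and admissible $u$ must be non-constant because $\into m<0$.

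For the first bullet my approach is to fix $s_1<s_2$ in $(0,1]$ and let $u_2$ be a minimizer at $s_2$. Under the hypothesis $d\ge 1/\mu_1$ one has $d\mu_k\ge d\mu_1\ge 1$ for every $k\ge 1$, hence $s\mapsto (d\mu_k)^{s}$ is non-decreasing in $s$ for each such $k$. Plugging $u_2$ into the $s_1$-Rayleigh quotient then gives
\[
\la_1(m,d,s_2)=\frac{\sum_{k\ge 1}(d\mu_k)^{s_2}c_k^{2}}{\into m u_2^{2}}\ge \frac{\sum_{k\ge 1}(d\mu_k)^{s_1}c_k^{2}}{\into m u_2^{2}}\ge \la_1(m,d,s_1),
\]
which is the desired monotonicity; strict monotonicity follows as soon as some nonzero Fourier coefficient of $u_2$ lies on a mode with $d\mu_k>1$, which is automatic when $d>1/\mu_1$.

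For the second bullet the strategy is a logarithmic-derivative argument. Appendix \ref{app:eig} guarantees that $\Lambda\in C^{\infty}((0,1])$ and $\Lambda(s)>0$, so from $\la_1(m,d,s)=d^{s}\Lambda(s)$ one gets
\[
\partial_s\log\la_1(m,d,s)=\log d+\frac{\Lambda'(s)}{\Lambda(s)}.
\]
On the compact interval $[a,1]$ the continuous quotient $\Lambda'/\Lambda$ attains a finite maximum $M_a$, depending only on $a$, $\Omega$ and $m$; I would then take $\underline{d}:=e^{-M_a-1}$, so that for every $d\le \underline{d}$ one has $\log d<-\Lambda'(s)/\Lambda(s)$ uniformly in $s\in[a,1]$, yielding $\partial_s\la_1<0$ throughout $[a,1]$. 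The principal difficulty of the scheme is not the monotonicity arguments themselves, which are short, but rather the variational and regularity input on $\la_1(m,1,s)$, namely existence and uniqueness of the principal eigenvalue, attainment of the infimum, and smoothness of the map $s\mapsto \la_1(m,1,s)$; all of these are precisely what Appendix \ref{app:eig} is designed to provide.
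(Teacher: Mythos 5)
Your argument follows the paper's proof essentially step for step: the first bullet by comparing Rayleigh quotients through $(d\mu_k)^{s_1}\le(d\mu_k)^{s_2}$ for $d\mu_k\ge1$, and the second through the sign of $\partial_s\log\lambda_1(d,s)=\log d+\Lambda'(s)/\Lambda(s)$ on the compact interval $[a,1]$, which is exactly how the paper defines $\underline d$ (the paper takes $\underline d=\min_{[a,1]}\exp(-\Lambda'/\Lambda)$; your extra factor $e^{-1}$ is harmless).

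The one place you stop short of the stated conclusion is strictness at the borderline value $d=1/\mu_1$. Your criterion for strict increase (a nonzero Fourier coefficient of $u_2$ on a mode with $d\mu_k>1$) is only verified for $d>1/\mu_1$; when $d=1/\mu_1$ the minimizer could a priori be supported on the constant mode together with the $\mu_1$-eigenspace, where every weight $(d\mu_k)^s$ equals $1$ and your inequality degenerates to an equality. The paper closes this case by a short contradiction argument: if $\lambda_1(d,s_1)=\lambda_1(d,s_2)$, then the eigenfunction $u$ of $\lambda_1(d,s_2)$ also attains $\lambda_1(d,s_1)$, which forces $u=\sum_{k=1}^{\nu_1}u_k\phi_k$ (with $\nu_1$ the multiplicity of $\mu_1$) and hence
\[
\lambda_1(d,s_1)\,m\,u \;=\; d^{s_1}\lapneu{s_1}u \;=\; u,
\]
so that $m\equiv 1/\lambda_1(d,s_1)>0$ wherever $u\neq0$; since such a finite combination of eigenfunctions cannot vanish on a set of positive measure, this contradicts $\into m<0$. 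Adding this observation makes your proof complete and identical in substance to the paper's.
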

\textbf{Biological Interpretation.}
The motility coefficient $d$ is related to a characteristic length associated
with the diffusion process,  and in particular a small $d$ corresponds to
the case of a domain which is large, with respect to the
diffusion characteristic length, and vice versa. As
a consequence,  Theorem \ref{prop:dep_on_d} states that
in very large environments the local diffusion is more successful, while in very small ones a fractional diffusion strategy would be preferable. Similar effects in related models were already
noticed in \cite{CaDiVa_2016}, Theorem 1.5. As observed in Section \ref{sec:model}, one
may consider as a motility coefficient $d$ instead of $d^{s}$. In such a case, we have that the corresponding eigenvalue depends linearly on $d$, therefore its monotonicity properties
(w.r.t. $s$) are not affected by changing the motility. 

\medskip

From Theorem \ref{prop:dep_on_d} it is clear that, when $d$ increases from $\underline{d}$
to $1/\mu_1$, then the map $s \mapsto \lambda_1(m,d,s)$ has a transition from decreasing to
increasing, and therefore it develops internal critical points.
The main result we obtain
in this paper is  the following.
\begin{theorem}\label{thm:main_intro}
Let $M,\rho,\delta$ be positive constants and set
\begin{equation}\label{eq:mconlapalla}
\tM:= \left\{ m \in \mathcal{M} :
m\geq -M,\,\exists\, B_\rho(x_0)\subset \Omega\text{ with } m|_{B_\rho(x_0)}\geq\delta
\right\}.
\end{equation}
Let us assume that  $m\in\tM$ and that one of the following conditions holds:
\begin{itemize}
 \item either
\begin{equation}\label{eq:mainassnew}
\frac{\delta}{M} \cdot \rho^2\mu_1 > \lambda_1^{\text{Dir}}(B_1),
\end{equation}
 where $\lambda_1^{\text{Dir}}(B_1)$ is
 the first eigenvalue of $-\Delta$ with homogenous Dirichlet boundary  conditions  of the  ball of radius $1$;
 \item or  $M,\rho,\delta$ are arbitrary, and
 \begin{equation}\label{eq:mainassA}
-A< \int_\Omega m <0,
\end{equation}
where $A=A(M,\rho,\delta,N)>0$ is an explicit constant.
\end{itemize}
Then, depending on $d>0$, either the map $s \mapsto \lambda_1(m,d,s)$ is monotone,
or it has exactly one maximum in $(0,1)$. In particular,
the limit $\lambda_1(m,d,0^+)$
is well defined for every $d$, and
\[
\inf_{0<s\leq 1} \lambda_1(m,d,s) =
\begin{cases}
\lambda_1(m,d,1) & \text{when }0<d\le d^*\\
\lambda_1(m,d,0^+) & \text{when }d\ge d^*,
\end{cases}
\]
where $d^*=\lambda_1(m,1,0^+)/\lambda_1(m,1,1)$.
\end{theorem}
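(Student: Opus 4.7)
The starting point is the scaling identity $\lambda_{1}(m,d,s) = d^{s}\Lambda(s)$ with $\Lambda(s):=\lambda_{1}(m,1,s)$, which reduces the analysis to the single map $\Lambda\colon(0,1]\to(0,\infty)$. Setting $F(s) := \log\lambda_{1}(m,d,s) = s\log d + \log\Lambda(s)$, any interior critical point of $\lambda_{1}(m,d,\cdot)$ is a zero of $F'(s) = \log d + (\log\Lambda)'(s)$. The whole qualitative content of the theorem then follows from the single claim that $(\log\Lambda)'$ is \emph{strictly decreasing} on $(0,1)$: for each $d>0$, either $-\log d$ lies outside the range of $(\log\Lambda)'$ and $F$ is strictly monotone, or it is attained at a unique $s^{*}\in(0,1)$, across which $F'$ changes sign from positive to negative, producing a unique interior maximum. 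The infimum is then attained at an endpoint, and comparing $\Lambda(0^{+})$ with $d\,\Lambda(1)$ produces the threshold $d^{*}=\Lambda(0^{+})/\Lambda(1)$.

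To access $(\log\Lambda)'$ concretely I would apply the Hellmann--Feynman argument: differentiating $(-\Delta_{\mathrm{N}})^{s}u_{s}=\Lambda(s)mu_{s}$ in $s$, testing against $u_{s}$, and using the normalization $\int_{\Omega}mu_{s}^{2}=1$ together with the spectral expansion $u_{s}=\sum_{k}c_{k}(s)\phi_{k}$, I expect to obtain
\[
\frac{\Lambda'(s)}{\Lambda(s)} = \frac{\sum_{k\ge 1}(\log\mu_{k})\,\mu_{k}^{s}c_{k}(s)^{2}}{\sum_{k\ge 1}\mu_{k}^{s}c_{k}(s)^{2}},
\]
so $(\log\Lambda)'(s)$ is the expectation of $\log\mu_{k}$ under the probability $p_{k}(s)\propto \mu_{k}^{s}c_{k}(s)^{2}$. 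For a \emph{fixed} $u$ with $\int mu^{2}>0$, the Rayleigh quotient $R(s,u)=\sum_{k}\mu_{k}^{s}c_{k}^{2}/\int m u^{2}$ satisfies $s\mapsto\log R(s,u)$ strictly convex, since $\log\sum_{k}e^{s\log\mu_{k}}c_{k}^{2}$ is the cumulant generating function of a discrete measure; equivalently, this is the H\"older interpolation $\|u\|_{H^{s}}^{2}\le \|u\|_{H^{1}}^{2s}\|u\|_{L^{2}}^{2(1-s)}$ for $s\in[0,1]$.

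The core difficulty, and the main obstacle I foresee, is to upgrade this pointwise log-convexity of $R(\cdot,u)$ into strict log-\emph{concavity} of the envelope $\log\Lambda(s)=\inf_{u}\log R(s,u)$, which has the ``wrong'' sign a priori: the envelope inequality $\Lambda''(s)\le\partial_{ss}R(s,u_{s})$ only yields an upper bound on $\Lambda''$, so the negative contribution must come from the $s$-variation of the eigenfunction itself. This is where the structural hypotheses on $m$ enter: under \eqref{eq:mainassnew}, choosing as test function the first Dirichlet eigenfunction of $B_{\rho}(x_{0})$ extended by zero and combining with the interpolation inequality above gives the quantitative upper bound $\Lambda(s)\le \delta^{-1}\bigl(\lambda_{1}^{\mathrm{Dir}}(B_{1})/\rho^{2}\bigr)^{s}$, which, paired with the lower bound coming from $m\ge-M$, forces the spectral mass $p_{k}(s)$ to concentrate on small $\mu_{k}$ in a way that makes the variance-type correction from $\dot u_{s}$ dominate with the correct sign. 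Assumption \eqref{eq:mainassA} plays the analogous role via a perturbative argument around the reference case $\int m=0$, and additionally guarantees that the limit $\lim_{s\to 0^{+}}\Lambda(s)$ exists as a finite positive number, characterized from the formal limit equation $u-\bar u=\Lambda(0^{+})mu$ as the unique positive root of $\int_{\Omega}\Lambda m/(1-\Lambda m)\,dx=0$. Once strict log-concavity of $\Lambda$ on $(0,1)$ is established, the remainder of the theorem follows immediately from the reduction in the first paragraph.
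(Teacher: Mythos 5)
Your reduction is sound and matches the paper's starting point: the scaling $\lambda_1(m,d,s)=d^s\Lambda(s)$ with $\Lambda(s)=\lambda_1(m,1,s)$, the Hellmann--Feynman identity $\Lambda'(s)/\Lambda(s)=\sum_k(\log\mu_k)\mu_k^s a_k^2/\sum_k\mu_k^s a_k^2$, and the observation that everything follows once one shows that $(\log\Lambda)'$ cannot cross any level upward (equivalently, that every interior critical point of $s\mapsto d^s\Lambda(s)$ is a local maximum). You have also correctly identified the central obstruction: for fixed $u$ the Rayleigh quotient is log-\emph{convex} in $s$, so the envelope inequality goes the wrong way and the decisive negative contribution must come from the $s$-dependence of the eigenfunction.

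The gap is that you never actually produce that negative contribution: the sentence claiming that the test-function bound $\Lambda(s)\le\delta^{-1}(\lambda_1^{\mathrm{Dir}}(B_1)/\rho^2)^s$, ``paired with the lower bound coming from $m\ge -M$, forces the spectral mass to concentrate on small $\mu_k$ in a way that makes the variance-type correction from $\dot u_s$ dominate with the correct sign'' is a hope, not an argument, and it is not clear it can be made to work in that form. The paper's mechanism is quite different and more structured: at a critical point $\bar s$ one builds an explicit second-order competitor curve $u(t)$ on the constraint manifold $\{\int_\Omega mu^2=1\}$ through $\psi_{\bar s}$ with velocity $\alpha v$, where $v$ is the unique solution of $d^{\bar s}\lapneu{\bar s}v=\partial_s[d^s\lapneu{s}]\psi_s|_{s=\bar s}$ with $\int_\Omega mv=0$; the second derivative of $f(s)=\scal{d^s\lapneu{s}u(s)}{u(s)}$ reduces to a quadratic polynomial in $\alpha$ whose sign is controlled by the ratio $\lambda_1(1,\bar s)/\lambda_{-1}(1,\bar s)$, where $\lambda_{-1}$ is the negative principal eigenvalue obtained by minimizing over $\{\int_\Omega mu^2=-1,\ \int_\Omega mu=0\}$. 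The abstract condition $-\lambda_{-1}(1,s)>\lambda_1(1,s)$ is then reduced, via a Poincar\'e inequality applied to the $\lambda_{-1}$-eigenfunction and the upper bound $\lambda_1(m,1,s)\le\mu_1^{s-1}\lambda_1(m,1,1)$, to the single checkable inequality $\lambda_1(m,1,1)<\mu_1/M$, and only at this last stage do the hypotheses \eqref{eq:mainassnew} or \eqref{eq:mainassA} enter, through the Dirichlet test function on $B_\rho(x_0)$ (suitably shifted to handle the sign of $\int_\Omega m$). None of this machinery --- the auxiliary function $v$, the second eigenvalue $\lambda_{-1}$, the role of $\mu_1$ --- appears in your sketch, and without it the claimed strict log-concavity of $\Lambda$ remains unproved. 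Your closing characterization of $\Lambda(0^+)$ as a root of $\int_\Omega\Lambda m/(1-\Lambda m)\,dx=0$ is likewise asserted without proof and is not needed: the paper obtains the existence of the limit simply from eventual monotonicity near $s=0$ together with the two-sided bounds of Proposition \ref{prop:lambda1dasopra}.
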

\textbf{Biological Interpretation.}
Let us first comment hypotheses
\eqref{eq:mconlapalla}, \eqref{eq:mainassnew}, \eqref{eq:mainassA}.
Non-degeneracy conditions as the one present in  \eqref{eq:mconlapalla}
have been already considered  in the literature to avoid an excessive
fragmentation of the favorable region (see for instance
Theorem 3.1 in \cite{MR1014659}). Indeed such an assumption insures
the existence of an enclave having a fixed minimal size, measured by $
\rho$, in which the growth rate is at least
$\delta$. From this point of view, assumption \eqref{eq:mainassnew}
requires that $\rho$ and  $\delta$ are not too small, with respect to the
other parameters. Notice that both the quantities
$\delta/M$ and $\rho^2\mu_1$ are scale invariant: the first is the ratio
between the least guaranteed  favorable growth rate and the worst
unfavorable one; the second compares the sizes of the enclave
and of $\Omega$, respectively (recall that, being $\mu_1$ an
eigenvalue  of $\Omega$,
$\mu_1(t\Omega) = t^{-2}\mu_1(\Omega)$).
Moreover, being $\lambda_1^{\text{Dir}}(B_1)  $ in
\eqref{eq:mainassnew} the first eigenvalue of $-\Delta$ with homogenous Dirichlet boundary  conditions  in the  ball with unitary radius,  it is actually an explicit universal constant only
depending on the dimension $N$ and it can be explicitly
computed. In particular, in dimension 2 or 3
\[
\lambda_1^{\text{Dir}}(B_1)  \simeq 5.78, \qquad \lambda_1^{\text{Dir}}(B_1)  \simeq 9.87.
\]
On the
other hand, assumption \eqref{eq:mainassA} admits
the environment to be possibly severely disrupted, once its average is
sufficiently close to $0$, being above an explicit lower bound
given by
\begin{equation}\label{eq:Aesp}
A = \left(\frac{2|\partial B_1|}{\lambda_1^{\text{Dir}}(B_1)}\right)^{1/2}
\frac{\mu_1 \delta^2\rho^{2+N/2}}{M\lambda_1^{\text{Dir}}(B_1)-\mu_1 \delta\rho^2}.
\end{equation}
Under this perspective, Theorem \ref{thm:main_intro} says that, if
\begin{itemize}
 \item either the habitat is not too  fragmented,
 \item or it is fragmented, but not too hostile in average,
\end{itemize}
then the best choice of $s$ is always either the smallest admissible value, or the biggest one.
This extends the consequences of Theorem \ref{prop:dep_on_d} also to the case
when the sizes of $d$ and $\Omega$ are comparable (beyond that of both $d$ and $s$ small). 
As observed in Section \ref{sec:model}, one
may consider as a motility coefficient $d$ instead of $d^{s}$.
 Then, dividing by $d$ and relabelling $\lambda$, one can easily obtain a version
of Theorem \ref{thm:main_intro} also for this problem. In particular, also in this case $d$ does
not affect the monotonicity properties of $\lambda$ and for any $m$ satisfying the assumptions
of the theorem, the optimal strategy is either $s=1$ or $s=0^+$.
\medskip

As we explained in Section \ref{sec:model},
problems \eqref{eq:eigen_u} and \eqref{eq:period_eigen}
enjoy the same structure. Indeed, thanks to Proposition
\ref{prop:periodic_is_spectral}, we can interpret the fractional laplacian on the full space, in a periodic environment, as a spectral operator. Hence
we can extend the analysis of the Neumann problem also to the periodic case.
In this respect, the key observation is that the spectrum of the
Neumann problem and that of the periodic one share the same main
properties, namely, they both consist in  a diverging sequence of eigenvalues, with first,
simple element $\mu_0=\nu_0=0$, and they both
are associated with a basis of eigenfunctions which are orthogonal in $H^1$ and
orthonormal in $L^2$.
Then all the results for the Neumann case also hold true in the periodic
one. In particular, we have the following counterpart of Theorem
\ref{thm:main_intro} in the periodic setting.
\begin{theorem}\label{thm:main_intro_per}
Let $m$ be $\mathcal{C}_l$-periodic. If $m|_{\mathcal{C}_l}$ satisfies the assumptions of
Theorem \ref{thm:main_intro} then, for every $d>0$, the map $s \mapsto \lambda_{\mathrm{per}}(m,d,s)$ is either monotone
or it has exactly one internal maximum in $(0,1)$, and
\[
\inf_{0<s\leq 1} \lambda_{\mathrm{per}}(m,d,s) =
\begin{cases}
\lambda_{\mathrm{per}}(m,d,1) & \text{when }0<d\le d^*\\
\lambda_{\mathrm{per}}(m,d,0^+) & \text{when }d\ge d^*,
\end{cases}
\]
where $d^*=\lambda_{\mathrm{per}}(m,1,0^+)/\lambda_{\mathrm{per}}(m,1,1)$.
\end{theorem}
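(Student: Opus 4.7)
The plan is to reduce the periodic case to the same abstract spectral framework underlying the Neumann result, and then to recycle the proof of Theorem \ref{thm:main_intro}. By Proposition \ref{prop:periodic_is_spectral}, a $\mathcal{C}_l$-periodic function $u$ satisfies $(-\Delta_{\R^{N}})^{s}u = (-\Delta_{\mathrm{per}})^{s}u$ on $\mathcal{C}_l$, so the eigenvalue problem \eqref{eq:period_eigen} can be rewritten as
\begin{equation*}
d^{s}(-\Delta_{\mathrm{per}})^{s}u = \lambda\, m\, u,\qquad x\in\mathcal{C}_{l},
\end{equation*}
for $u$ in the $H^{s}$-type periodic energy space induced by the decomposition \eqref{eq;periodic_fraclap}. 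Consequently, $\lambda_{\mathrm{per}}(m,d,s)$ admits a spectral representation through $(\nu_{k},\varphi_{k})$ formally identical to the one of $\lambda_{1}(m,d,s)$ through $(\mu_{k},\phi_{k})$, and a min-max characterisation on the periodic Sobolev space that mirrors the Neumann one.

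I would then isolate the abstract ingredients actually used in the proof of Theorem \ref{thm:main_intro}. These are (i) the existence of a diverging sequence of non-negative eigenvalues $0=\nu_{0}<\nu_{1}\le\nu_{2}\le\cdots$ with first eigenvalue simple and constant first eigenfunction, (ii) an orthonormal $L^{2}(\mathcal{C}_l)$ basis of eigenfunctions, orthogonal in $H^{1}(\mathcal{C}_l)$, and (iii) the scaling identity $\lambda_{\mathrm{per}}(m,d,s)=d^{s}\lambda_{\mathrm{per}}(m,1,s)$ obtained by homogeneity of $(-\Delta_{\mathrm{per}})^{s}$. All three hold for the periodic Laplacian on $\mathbb{T}^{N}=\R^{N}/\mathcal{C}_{l}$, with $(\nu_{k},\varphi_{k})$ explicitly computable. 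The role played by $\mu_{1}$ in Theorem \ref{prop:dep_on_d} is taken by $\nu_{1}>0$, and the non-degeneracy hypothesis \eqref{eq:mconlapalla}, together with either \eqref{eq:mainassnew} or \eqref{eq:mainassA}, transfers verbatim once $\Omega$ is replaced by $\mathcal{C}_{l}$ (the ball $B_{\rho}(x_{0})$ is understood inside $\mathcal{C}_{l}$).

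With these ingredients in hand, the proof proceeds in parallel with the Neumann case: the scaling identity yields $\partial_{d}\log\lambda_{\mathrm{per}}=s/d$, so the monotonicity of $s\mapsto\lambda_{\mathrm{per}}(m,d,s)$ is governed by $\partial_{s}\log\lambda_{\mathrm{per}}(m,1,s)+\log d$; the analysis carried out in Section \ref{preliminari} under \eqref{eq:mainassnew} or \eqref{eq:mainassA} shows that this quantity crosses zero at most once and from below, whence the map is either monotone or has a unique internal maximum in $(0,1)$. The infimum over $s\in(0,1]$ is therefore attained at an endpoint, and the comparison
\begin{equation*}
\lambda_{\mathrm{per}}(m,d,1)=d\,\lambda_{\mathrm{per}}(m,1,1)\quad\text{vs.}\quad \lambda_{\mathrm{per}}(m,d,0^{+})=\lambda_{\mathrm{per}}(m,1,0^{+})
\end{equation*}
pins down the threshold $d^{\ast}=\lambda_{\mathrm{per}}(m,1,0^{+})/\lambda_{\mathrm{per}}(m,1,1)$ exactly as claimed.

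The main obstacle is purely of verificational nature: one must check that every step of the proof of Theorem \ref{thm:main_intro} — in particular the test-function computations comparing a rescaled Dirichlet eigenfunction of $B_{1}$ (extended by zero) against the constant test function, and the sign analysis of $\partial_{s}\lambda_{\mathrm{per}}$ leading to the ``at most one critical point'' conclusion — uses nothing beyond the three abstract ingredients listed above. Since the only Neumann-specific facts entering those computations are $\mu_{0}=0$ with constant first eigenfunction and the spectral definition of the fractional power, both of which remain true in the periodic setting, no new analytic difficulty arises and the conclusion of Theorem \ref{thm:main_intro_per} follows.
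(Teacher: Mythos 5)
Your proposal matches the paper's own argument: reduce the periodic problem to the spectral framework via Proposition \ref{prop:periodic_is_spectral} and then rerun the Neumann proof verbatim with $(\mu_k,\phi_k)$ replaced by $(\nu_k,\varphi_k)$, using only that $\nu_0=0$ is simple with constant eigenfunction, that $\nu_k\to\infty$, and the scaling in $d$. The one ingredient you do not flag explicitly is the regularity input of Proposition \ref{prop:regularity} (needed in Lemmas \ref{lem:curvetta} and \ref{lem:vtilde}), which the paper notes must be transferred to the periodic setting --- either by adapting \cite{cast} or, more easily, by invoking the full-space regularity theory of \cite{cabresire}.
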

\textbf{Biological Interpretation.}
In a periodically fragmented habitat, when the dispersal of the
population is triggered by a random  walk of Levy flight-type, the
same conclusions of Theorem \ref{thm:main_intro} hold true:
depending on the features of the habitat, namely not
excessive fragmentation or not too  hostility in average,
the best strategy of diffusion is  still either the local one
(i.e. $s=1$) or the smallest fractional.
\medskip

Let us point out that with our techniques we can
also deal  with other
fractional spectral operators. For instance, the Dirichlet case can be treated
in an even easier way, since in such case zero is not an eigenvalue of $-\Delta$. More
generally, we can also deal with the fractional Laplace-Beltrami operator on a compact
Riemannian manifold, with or without boundary (with appropriate boundary conditions in the former
case). From this point of view, of course, the periodic case described above corresponds to the
flat torus.

A natural question is to wonder whether or not
the assumptions in Theorems \ref{thm:main_intro}, \ref{thm:main_intro_per}
on the environment are merely  technical
 and the result may hold for more general $m$.
In Section \ref{sec:final} we provide some simple
numerical simulations which suggest that this is not the case and that the map
$s \mapsto \lambda_1(m,d,s)$ may present interior minima, as well as multiple
local extrema (see Figure \ref{fig:result} ahead).

\section{Proofs of The main results}\label{preliminari}

In this section we will provide the proofs of the main results of the paper:
Theorem \ref{prop:dep_on_d}, Theorem \ref{thm:main_intro} and
Theorem \ref{thm:main_intro_per}.  Our arguments will be based on
the study of the positive principal eigenvalue $\la_{1}(m,d,s)$,
which  can be characterized as follows
\begin{equation}\label{eq:def_eig}
\lambda_{1}(m,d,s)
=d^{s}\min_{ H^s(\Omega)}
\left\{\scal{ \lapneu{s}u}{u}:
\into mu^{2}=1
\right\}.
\end{equation}
The proof of the existence and the main properties of
$\lambda_{1}(m,d,s)$ are collected in Appendix \ref{app:eig}.

The following estimates will be exploited in our analysis.

\begin{proposition}\label{prop:lambda1dasopra}
The eigenvalue $\lambda_{1}(m,d,s)$ satisfies the following estimates
\begin{align}\label{alto}
 \la_{1}(m,d,s)&\leq (d\mu_1)^{s-1} \lambda_1(m,d,1),
\\
\label{basso}
 \la_{1}(m,d,s)&\geq  \frac{\dys d^{s}\mu_{1}^{s}\left|\into m\right|}{\dys\sup_{\Omega}m\left|\into m\right|+\|m\|_{L^2}^{2}}.
\end{align}
\end{proposition}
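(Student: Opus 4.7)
Both estimates come from the Rayleigh quotient characterization \eqref{eq:def_eig}. The upper bound is obtained by plugging the local ($s=1$) positive eigenfunction into the fractional functional, while the lower bound is obtained by directly estimating the numerator and denominator of the quotient for an arbitrary admissible $u$, exploiting the decomposition $u=u_0+\tilde u$ and the sign condition $\into m<0$.

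\textbf{Proof of \eqref{alto}.} Let $u_{1}\in H^{1}(\Omega)$ be a positive eigenfunction associated with $\lambda_{1}(m,d,1)$, normalized so that $\into m\, u_{1}^{2}=1$. Then $u_{1}$ is admissible in \eqref{eq:def_eig} for every $s\in(0,1]$, since $H^{1}\subset H^{s}$. Expanding $u_{1}=(u_{1})_{0}+\sum_{k\ge 1}(u_{1})_{k}\phi_{k}$ as in \eqref{not:muk}, one has
\[
\scal{\lapneu{s}u_{1}}{u_{1}}=\sum_{k\ge 1}\mu_{k}^{s}(u_{1})_{k}^{2}
=\sum_{k\ge 1}\mu_{k}^{s-1}\mu_{k}(u_{1})_{k}^{2}
\le \mu_{1}^{s-1}\sum_{k\ge 1}\mu_{k}(u_{1})_{k}^{2}
=\mu_{1}^{s-1}\into|\grad u_{1}|^{2},
\]
where the inequality uses $\mu_{k}\ge\mu_{1}$ for $k\ge 1$ together with $s-1\le 0$. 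Since $d\into|\grad u_{1}|^{2}=\lambda_{1}(m,d,1)$, multiplying by $d^{s}$ yields \eqref{alto}.

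\textbf{Proof of \eqref{basso}.} Take any admissible $u=u_{0}+\tilde u$ in \eqref{eq:def_eig} and set $b=\|\tilde u\|_{L^{2}}$. On the one hand,
\[
\scal{\lapneu{s}u}{u}=\sum_{k\ge 1}\mu_{k}^{s}u_{k}^{2}\ge \mu_{1}^{s}\,b^{2}.
\]
On the other hand, expanding the constraint gives
\[
1=\into m\,u^{2}=u_{0}^{2}\into m+2u_{0}\into m\,\tilde u+\into m\,\tilde u^{2}.
\]
Since $\into m<0$, by Cauchy--Schwarz and $m\le\sup_{\Omega}m$ one obtains
\[
1+\Bigl|\into m\Bigr|u_{0}^{2}\le 2|u_{0}|\,\|m\|_{L^{2}}\,b+\sup_{\Omega}m\cdot b^{2}.
\]
The right-hand side is a concave quadratic in $|u_{0}|$ with maximum $\|m\|_{L^{2}}^{2}b^{2}/|\into m|+\sup_{\Omega}m\cdot b^{2}$, so
\[
b^{2}\ge \frac{|\into m|}{\sup_{\Omega}m\,|\into m|+\|m\|_{L^{2}}^{2}}.
\]
Combining the two displayed bounds, multiplying by $d^{s}$, and minimizing over admissible $u$ gives \eqref{basso}.

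\textbf{Main obstacle.} The only nontrivial step is the lower bound, where the presence of the zero-mode $u_{0}$ must be controlled: $\lapneu{s}$ annihilates constants, so the numerator only sees $\tilde u$, while the denominator involves both $u_{0}$ and $\tilde u$ through a quadratic form whose sign is not prescribed. The key point is that $\into m<0$ makes the $u_{0}^{2}$ coefficient strictly negative, so that the constraint $\into m\,u^{2}=1$ forces $b$ to be bounded below; this is precisely what the optimization over $u_{0}$ makes quantitative.
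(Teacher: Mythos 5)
Your proof is correct and follows essentially the same route as the paper's: the upper bound by testing \eqref{eq:def_eig} with the $s=1$ eigenfunction and comparing $(d\mu_k)^{s-1}$ with $(d\mu_1)^{s-1}$, and the lower bound via the decomposition $u=u_0+\tilde u$, Cauchy--Schwarz, and the Poincar\'e inequality $\scal{\lapneu{s}u}{u}\ge\mu_1^s\|\tilde u\|_{L^2}^2$. The one (harmless) difference is in the lower bound: the paper tests the eigenvalue identity on the eigenfunction itself and uses the fact that $\lapneu{s}\psi_1$ has zero average to determine the constant mode exactly, whereas you work with an arbitrary admissible $u$ and eliminate $u_0$ by maximizing a concave quadratic; this yields the same constant and is, if anything, slightly more robust since it never invokes the equation. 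A minor phrasing slip: the ``concave quadratic in $|u_0|$'' is the right-hand side only after the term $\bigl|\into m\bigr|u_0^2$ is moved back to the right; as displayed, your right-hand side is affine in $|u_0|$, though the stated maximum value and the resulting bound on $b^2$ are correct.
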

\begin{proof}
Let $\psi_1$ denote the first normalized eigenfunction associated with $\lambda_1(m,d,1)$. Then
\[
\begin{split}
\lambda_1(m,d,s) &\leq \scal{d^s\lapneu{s}\psi_1}{\psi_1} = \sum_k (d\mu_k)^s(\psi_1)_k^2 =
\sum_k (d\mu_k)^{s-1}d\mu_k(\psi_1)_k^2 \\
& \leq (d\mu_1)^{s-1}\sum_k d\mu_k(\psi_1)_k^2 =
(d\mu_1)^{s-1} \lambda_1(m,d,1).
\end{split}
\]
In order to show \eqref{basso},
we follow ideas introduced, for $s=1$, in \cite{Saut}. To start with,
notice that, using \eqref{not:muk},
the following Poincar\'e inequality holds:
\begin{equation}\label{eq:poincar}
u\in H^s(\Omega),\;u_0= 0
\quad\implies\quad
\int_\Omega u^2 \leq \frac{1}{\mu_1^s} \scal{\lapneu{s} u}{u}.
\end{equation}
Indeed,
\[
\int_\Omega u^2 = \sum_{k\geq 1} u_k^2 \leq \frac{1}{\mu_1^s}\sum_{k\geq 1}\mu_k^s u_k^2
=\frac{1}{\mu_1^s} \scal{\lapneu{s} u}{u}.
\]
Using the decomposition $\psi_1= h +\tilde{ \psi_1} $, with $h\in\R$ and $\tilde{ \psi_1 }$
with zero average, we can exploit the fact that $\lapneu{s} \psi_1$ has zero average to infer
\[
 0=\la_{1}(m,d,s)\into m(h +\tilde{ \psi_1 })
 \quad\implies\quad
 h=-\frac{\into m\tilde{\psi}_{1}}{\into m}.
\]
Then
\begin{align*}
d^{s}\scal{\lapneu{s} \psi_1}{\psi_1}
&=\la_{1}(m,d,s)\into m(h +\tilde{ \psi_1 })^{2}
\\
&=
\la_{1}(m,d,s)\left\{
\into m\tilde\psi^{2}_{1}-\frac1{\into m}
\left[\into m\tilde{\psi}_{1}\right]^{2}\right\}
\\
&\leq
\la_{1}(m,d,s)\left\{
\sup_{\Omega}m +\frac1{\left|\into m\right|}\|m\|_{L^2}^{2}
\right\}\int_\Omega \tilde\psi_1^2
\\
&\leq
\frac{\la_{1}(m,d,s)}{{\mu_{1}^{s}}}\left\{
\sup_{\Omega}m +\frac1{\left|\into m\right|}\|m\|_{L^2}^{2}
\right\}\scal{\lapneu{s}\psi_1}{\psi_1},
\end{align*}
where we used \eqref{eq:poincar} in the last line. Then \eqref{basso} holds.
\end{proof}
{\bf Biological Interpretation.}
Proposition \ref{prop:lambda1dasopra} furnishes bounds on the survival
threshold in terms on the survival threshold for $s=1$, the
motility coefficient and the domain. In particular, it
implies that the survival threshold can neither vanish nor blow up, even when
$s$ approaches zero.
\medskip

In the following,
 $m\in\mathcal{M}$ is fixed, therefore, for easier notation,
we omit the dependence on $m$ and write $\lambda_1=\lambda_1(d,s)$.
We first observe that, according to the characterization \eqref{eq:def_eig},
the parameter $d$ only affects the first eigenvalue
$\lambda_1$ and not the corresponding eigenfunction.
\begin{lemma}\label{lem:d1d2}
For any $d_1,d_2\in\R^+$ we have that
\[
\lambda_1(d_2,s)=\frac{d_2^s}{d_1^s}\lambda_1(d_1,s),
\]
and both eigenvalues share the same eigenfunction.
Furthermore, for any fixed $s\in(0,1]$, the map
\[
d \mapsto \lambda_1(d,s)
\]
is monotone increasing.
\end{lemma}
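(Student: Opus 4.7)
The plan is to reduce all three assertions to a single observation: in the Rayleigh-type characterization \eqref{eq:def_eig}, the motility coefficient $d$ enters only as the overall prefactor $d^s$. Concretely, I would introduce
\[
\Lambda(s) := \min_{H^s(\Omega)}\left\{\scal{\lapneu{s}u}{u} : \into mu^{2}=1\right\},
\]
and note that $\Lambda(s)$ depends only on $m$ and $s$, since neither the quadratic form $\scal{\lapneu{s}u}{u}$ nor the constraint $\into mu^{2}=1$ involve $d$. Hence \eqref{eq:def_eig} rewrites as $\lambda_{1}(d,s) = d^{s}\Lambda(s)$, and from this factorization all three statements fall out essentially for free.

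From $\lambda_{1}(d_i,s)=d_i^{s}\Lambda(s)$ for $i=1,2$, taking the ratio gives the scaling identity $\lambda_{1}(d_{2},s) = (d_{2}/d_{1})^{s}\lambda_{1}(d_{1},s)$; here one implicitly uses $\Lambda(s)>0$, which is just the positivity of $\lambda_{1}(1,s)$ guaranteed by the existence theory of Appendix \ref{app:eig}. For the eigenfunction statement, I would observe that the minimization problem defining $\Lambda(s)$ does not see $d$ at all: any minimizer $\psi$ associated with $\lambda_{1}(d_{1},s)$ automatically satisfies the same constraint and minimizes the same functional appearing in the characterization of $\lambda_{1}(d_{2},s)$, so the two principal eigenfunctions coincide (uniqueness up to sign being a consequence of the simplicity of $\lambda_{1}$ recalled in Appendix \ref{app:eig}). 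Finally, monotonicity of $d\mapsto \lambda_{1}(d,s) = d^{s}\Lambda(s)$ is nothing more than strict monotonicity of $d\mapsto d^{s}$ on $(0,+\infty)$ for $s\in(0,1]$, combined with $\Lambda(s)>0$.

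There is no real obstacle: the proof is an unpacking of the variational formula. The only point that deserves a brief word is that the admissible set $\{u\in H^{s}(\Omega):\into mu^{2}=1\}$ is nonempty and the minimum is attained, but this is precisely what the theory of Appendix \ref{app:eig} provides under \eqref{ipom}, so the plan only requires a pointer to that reference rather than an independent argument.
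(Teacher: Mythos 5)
Your proposal is correct. It does, however, take a slightly different route from the paper's. The paper works at the level of the eigenvalue \emph{equation}: it takes the positive normalized eigenfunction $\psi_1$ of the $d_1$-problem, observes that $d_2^s\lapneu{s}\psi_1=\frac{d_2^s}{d_1^s}\lambda_1(d_1,s)\,m\psi_1$, so that $\psi_1$ is a positive eigenfunction of the $d_2$-problem with eigenvalue $\nu=d_2^s\lambda_1(d_1,s)/d_1^s$, and then identifies $\nu$ with $\lambda_1(d_2,s)$ by invoking the simplicity of the principal eigenvalue and the sign of the eigenfunction (Propositions \ref{prop:lambdak} and \ref{prop:lambda1}). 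You instead read the factorization $\lambda_1(d,s)=d^s\Lambda(s)$ straight off the variational characterization \eqref{eq:def_eig}, where $d^s$ sits outside a minimization problem that does not involve $d$. Both arguments lean on Appendix \ref{app:eig}, but on different parts of it: the paper needs the uniqueness/positivity structure of the principal eigenpair, whereas you only need that \eqref{eq:def_eig} holds and that the minimum is attained by the principal eigenfunction. Your version is marginally more economical for the scaling identity and the monotonicity; the paper's version makes the ``same eigenfunction'' claim completely explicit via simplicity, while in your argument that claim rests on the (correct, but worth stating) identification of minimizers of the constrained problem with principal eigenfunctions, again supplied by the appendix. Either way the substance is the same observation — $d$ enters only through the prefactor $d^s$ — and your proof is sound.
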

\begin{proof}
The lemma is an immediate consequence of Propositions \ref{prop:lambdak} and
\ref{prop:lambda1}. Indeed, let $\psi_1$ be the first positive normalized
eigenfunction associated with $\lambda_1(d_1,s)$. Then
\[
d_2^s \lapneu{s} \psi_1 = \frac{d_2^s}{d_1^s}
\left(d_1^s \lapneu{s} \psi_{1} \right)=
\frac{d_2^s}{d_1^s}\lambda_1(d_1,s)m \psi_1.
\]
Therefore, $\psi_{1}$ is also a positive eigenfunction associated
with the eigenvalue $\nu=d_{2}^{s}\la_1(d_{1},s)/d_{1}^{s}$. Taking into account that
$\lambda_1(d_2,s)$ is simple we obtain that $\la_{1}(d_{2},s)=d_{2}^{s}\la_1(d_{1},s)/d_{1}^{s}$,
with same eigenfunction as $\la_1(d_{1},s)$.

In particular, we obtain that, for any $d>0$,
\begin{equation}\label{eq:lambda11}
\lambda_1(d,s) = d^s \lambda_1(1,s),
\end{equation}
so that the last statement  easily  follows, as $\la_{1}(1,s)$ is positive.
\end{proof}
\begin{proof}[Proof of Theorem \ref{prop:dep_on_d}]

Let $d\mu_1\geq 1$ and $s_1<s_2$. For any fixed $u$, we have
\[
\lambda_1(d,s_1)\le d^{s_1}\scal{\lapneu{s_1} u}{u} = \sum_k (d\mu_k)^{s_1}u_k^2 \leq \sum_k (d\mu_k)^{s_2}u_k^2 =
d^{s_2}\scal{\lapneu{s_2} u}{u}.
\]
Recalling \eqref{eq:def_eig}, we obtain that $\lambda_1$ is non-decreasing in $s$. To conclude,
let us assume by contradiction that $\lambda_1(d,s_1)=\lambda_1(d,s_2)$, and let $u$ denote
the first eigenfunction associated with $\lambda_1(d,s_2)$. As $\int_\Omega mu^2=1$, by the above inequality we deduce
that $u$ achieves also $\lambda_1(d,s_1)$; thus $d\mu_1=1$ and
$u_k=0$ whenever $k\geq\nu_1+1$, where $\nu_1$ is the multiplicity of $\mu_1$ as a Neumann eigenvalue of $-\Delta$. As a consequence,
\[
\lambda_1(d,s_1)mu = d^{s_1}\lapneu{s_1} u = \sum_{k=1}^{\nu_1} (d\mu_1)^{s_1}u_k\phi_k  =\sum_{k=1}^{\nu_1} u_k\phi_k =u,
\]
yielding a contradiction since $m$ is not constant.

In order to prove the second conclusion of Theorem \ref{prop:dep_on_d}, let us
differentiate \eqref{eq:lambda11} with respect to $s$ to obtain
\[
\frac{\partial}{\partial s}\lambda_1(d,s) = d^s \left[(\log d) \lambda_1(1,s)
+ \frac{\partial}{\partial s}\lambda_1(1,s)\right].
\]
By Propositions \ref{prop:lambda1} and \ref{prop:lambdak}, for any $a\in (0,1)$ the map $s\mapsto \lambda_1(1,s)$ is $C^1([a,1])$, and $\lambda_1(1,s)>0$ for $s\in [a,1]$.
Then, choosing
\[
\underline{d} = \min_{s\in[a,1]}\exp\frac{-\frac{\partial}{\partial s}\lambda_1(1,s)}{
\lambda_1(1,s)},
\]
the conclusion easily follows.
\end{proof}
\begin{remark}
Analogously, one can show that, for any $0<a<1$,
\[
\frac{\partial}{\partial s}\lambda_1(d,s)>0 \text{ for every }s\in[a,1]
\]
whenever
\[
d>\overline{d} = \max_{s\in[a,1]}\exp\frac{-\frac{\partial}{\partial s}\lambda_1(1,s)}{
\lambda_1(1,s)}.
\]
Note that the first conclusion of Theorem \ref{prop:dep_on_d} implies that $\overline{d} \leq 1/\mu_1$, for every
$a\in(0,1)$, implying
\[
\frac{\partial}{\partial s}\lambda_1(1,s) \geq \ln(\mu_1) \lambda_1(1,s), \qquad s\in(0,1].
\]
This provides a uniform condition on $d$ in order to have $\lambda_1(d,s)$
increasing for $s\in(0,1)$. On the other hand, we are not able to give an analogous
uniform assumption implying the opposite monotonicity.
\end{remark}

Next we turn to the study of the intermediate values of $d$, when the map
$s\mapsto \lambda_{1}(d,s)$ has a transition in its monotonicity properties.
In the following, we will denote the normalized first eigenfunction associated with $\lambda_1(d,s)$,
which does not depend on $d$, as $\psi_s$:
\begin{equation}\label{eq:psi_1}
d^s \lapneu{s} \psi_{s} = \lambda_1(d,s) m \psi_{s},\qquad \int_\Omega m \psi_s^2=1.
\end{equation}
The following two lemmas introduce the mathematical tools
exploited in the proof of Theorem \ref{thm:main_intro}.
We include here the statements for the reader's convenience
and we postpone the proofs in Appendix \ref{app:aux}.
\begin{lemma}\label{lem:curvetta}
Let $\bar s \in (0,1)$, $0<\ep<\bar s$, and $w\in H^{\bar s + \ep}(\Omega)$
be such that
\begin{equation}\label{eq:assumpt_curvetta}
\int_\Omega m \psi_{\bar s} w=0.
\end{equation}
Then there exists a $C^2$ curve $u:(\bar s-\ep,\bar s+ \ep)  \mapsto H^{\bar s + \ep}(\Omega)$ such that
\beq\label{equality:v}
u(\bar s) = \psi_{\bar s},\qquad \dot u(\bar s) = w,\qquad \int_\Omega m u^2(t) = 1\;\text{ for every }t.
\eeq
Furthermore
\beq\label{equation:ddotu}
 \scal{ d^{\bar s}\lapneu{\bar s} \psi_{\bar s}}{ \ddot u(\bar s)}
= -\lambda_1(d,\bar s)\int_\Omega m w^2.
\eeq
\end{lemma}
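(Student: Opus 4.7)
The idea is to build the curve explicitly inside the affine plane $\psi_{\bar s}+\R w \subset \psi_{\bar s}+\R\psi_{\bar s}+\R w$, correcting a linear deformation by a scalar factor in front of $\psi_{\bar s}$ so as to land on the constraint manifold $\{u:\int_\Omega m u^2=1\}$. Concretely, the plan is to set
\[
u(t) = \alpha(t)\,\psi_{\bar s} + (t-\bar s)\,w, \qquad t\in(\bar s-\eps,\bar s+\eps),
\]
and to choose the scalar function $\alpha$ so that $\int_\Omega m u^2(t)=1$ holds identically. Expanding, and using the normalization $\int_\Omega m\psi_{\bar s}^2=1$ together with the orthogonality hypothesis $\int_\Omega m\psi_{\bar s}w=0$, the constraint reduces to the algebraic identity $\alpha(t)^2 = 1-(t-\bar s)^2\int_\Omega m w^2$. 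Thus the natural choice is $\alpha(t):=\sqrt{1-(t-\bar s)^2\int_\Omega m w^2}$, which is well-defined and of class $C^\infty$ provided $\eps$ is small enough (depending on $\int_\Omega m w^2$, and trivially if this integral is $\leq 0$).

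Next I would verify the three conditions in \eqref{equality:v}. The identity $u(\bar s)=\psi_{\bar s}$ is immediate from $\alpha(\bar s)=1$. Differentiating $\alpha^2=1-(t-\bar s)^2\int_\Omega m w^2$ once gives $\dot\alpha(\bar s)=0$, which together with the linearity of the deformation yields $\dot u(\bar s)=w$. The constraint $\int_\Omega m u^2(t)=1$ holds by construction. Since $\psi_{\bar s}\in H^{\bar s+\eps}(\Omega)$ (by the regularity of the principal eigenfunction, cf.\ Appendix \ref{app:eig}) and $w\in H^{\bar s+\eps}(\Omega)$ by assumption, and $\alpha\in C^\infty$, the map $t\mapsto u(t)$ is smooth with values in $H^{\bar s+\eps}(\Omega)$, in particular of class $C^2$.

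Finally, for \eqref{equation:ddotu} I would compute $\ddot u(\bar s)$. Differentiating $\alpha^2=1-(t-\bar s)^2\int_\Omega m w^2$ twice and evaluating at $t=\bar s$ (using $\alpha(\bar s)=1$ and $\dot\alpha(\bar s)=0$) yields $\ddot\alpha(\bar s)=-\int_\Omega m w^2$, so that $\ddot u(\bar s)=-\bigl(\int_\Omega m w^2\bigr)\psi_{\bar s}$. Then pairing with $d^{\bar s}\lapneu{\bar s}\psi_{\bar s}$ and using the eigenvalue equation \eqref{eq:psi_1} together with $\int_\Omega m\psi_{\bar s}^2=1$ gives
\[
\scal{d^{\bar s}\lapneu{\bar s}\psi_{\bar s}}{\ddot u(\bar s)} = -\Bigl(\int_\Omega m w^2\Bigr)\,\lambda_1(d,\bar s)\int_\Omega m\psi_{\bar s}^2 = -\lambda_1(d,\bar s)\int_\Omega m w^2,
\]
which is exactly \eqref{equation:ddotu}.

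The only potentially delicate point is the choice of $\eps$: one must ensure that $(t-\bar s)^2\int_\Omega m w^2<1$ uniformly on $(\bar s-\eps,\bar s+\eps)$, which is harmless if we are allowed to shrink $\eps$ (the statement uses $\eps$ both for the Sobolev exponent and the radius, so I would silently replace the latter by a smaller one if needed). Apart from this, the construction is entirely explicit and does not require any implicit function theorem argument.
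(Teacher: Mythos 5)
Your proof is correct and follows essentially the same route as the paper: there the curve is taken as $u(t)=\gamma(t)/\sqrt{\int_\Omega m\gamma^2(t)}$ with $\gamma(t)=\psi_{\bar s}+(t-\bar s)w$, i.e.\ the whole linear path is normalized rather than only the $\psi_{\bar s}$-component, but the values of $u$, $\dot u$, $\ddot u$ at $t=\bar s$ come out the same. The only minor differences are that the paper derives \eqref{equation:ddotu} by differentiating the constraint $\int_\Omega m u^2(t)=1$ twice instead of computing $\ddot u(\bar s)$ explicitly, and it does not address the positivity of the quantity under the square root (which, as you correctly note, may require shrinking the interval depending on the sign of $\int_\Omega m w^2$ --- a harmless point in either construction).
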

Taking into account the spectral decomposition of  any $w\in H^{s+
\ep}(\Omega)$,
\[
w=\sum_{k=0}^{\infty}w_{k}\phi_{k},
\]
and recalling \eqref{def:lapneu}, we can define the following
operators, derivatives of $d^{s}\lapneu{s}$
with respect to $s$:
\begin{equation}\label{eq:LeT}
\begin{split}
  L_{s}(w)&:=\partial_{s}\left[d^{s}\lapneu{s}\right](w)=\sum_{k=1}^{\infty}(d\mu_{k})^{s}\ln(d\mu_{k})w_{k}\phi_k,\\
  T_{s}(w)&:=\partial^2_{ss}\left[d^{s}\lapneu{s}\right](w)=\sum_{k=1}^{\infty}(d\mu_{k})^{s}\ln^2(d\mu_{k})w_{k}\phi_k.
\end{split}
\end{equation}
\begin{lemma}\label{lem:vtilde}
For $\psi_s$ as in \eqref{eq:psi_1} there exists a unique $v\in H^s(\Omega)$, solution of the problem
\beq\label{problem:tildev}
d^s\lapneu{s} v= L_{s}\psi_{s},\qquad \int_\Omega m v =0.
\eeq
Furthermore $v \in H^{s'}(\Omega)$ for every $s'<2s$, and
\begin{equation}\label{eq:vertice}
\int_\Omega mv^2 = \max_{c\in\R} \int_\Omega m(c+v)^2.
\end{equation}
\end{lemma}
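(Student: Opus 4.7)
The plan is to construct $v$ directly via its Neumann spectral coefficients, and then verify each of the three assertions (existence and uniqueness, higher regularity, maximization) in turn.

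For existence and uniqueness, the observation to exploit is that $L_{s}\psi_{s}$ has no $\phi_{0}$-component by the very definition \eqref{eq:LeT}. Hence, projecting \eqref{problem:tildev} on each $\phi_{k}$ forces
\[
v_{k}=\ln(d\mu_{k})(\psi_{s})_{k}\qquad\text{for every }k\ge 1,
\]
while $v_{0}$ is left free, and I would fix it uniquely by imposing $\int_{\Omega}mv=0$. This is solvable because $\int_{\Omega}m\neq 0$ by \eqref{ipom}. Uniqueness then follows at once: any two solutions differ by a constant $c$ satisfying $d^{s}\lapneu{s}c=0$ and $c\int_{\Omega}m=0$, whence $c=0$.

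For the regularity $v\in H^{s'}(\Omega)$, $s'<2s$, the crucial step is to bootstrap the regularity of $\psi_{s}$ itself. Reading \eqref{eq:psi_1} coefficient by coefficient gives, for $k\ge 1$, the identity $(d\mu_{k})^{s}(\psi_{s})_{k}=\lambda_{1}(d,s)(m\psi_{s})_{k}$, so, since $m\in L^{\infty}(\Omega)$ and $\psi_{s}\in L^{2}(\Omega)$,
\[
\sum_{k\ge 1}\mu_{k}^{2s}(\psi_{s})_{k}^{2}=\frac{\lambda_{1}(d,s)^{2}}{d^{2s}}\sum_{k\ge 1}(m\psi_{s})_{k}^{2}\le \frac{\lambda_{1}(d,s)^{2}}{d^{2s}}\|m\psi_{s}\|_{L^{2}}^{2}<\infty,
\]
that is, $\psi_{s}\in H^{2s}(\Omega)$ in the spectral sense. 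Since $s'<2s$ there exists $C>0$ with $\ln^{2}(d\mu_{k})\le C\mu_{k}^{2s-s'}$ for every $k\ge 1$, and therefore
\[
\sum_{k\ge 1}\mu_{k}^{s'}v_{k}^{2}=\sum_{k\ge 1}\mu_{k}^{s'}\ln^{2}(d\mu_{k})(\psi_{s})_{k}^{2}\le C\sum_{k\ge 1}\mu_{k}^{2s}(\psi_{s})_{k}^{2}<\infty,
\]
which is exactly $v\in H^{s'}(\Omega)$; in particular $v\in H^{s}(\Omega)$.

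Finally, \eqref{eq:vertice} is a one-variable calculation: $f(c):=\int_{\Omega}m(c+v)^{2}=c^{2}\int_{\Omega}m+2c\int_{\Omega}mv+\int_{\Omega}mv^{2}$ is a downward parabola in $c$ since $\int_{\Omega}m<0$ by \eqref{ipom}, and the normalization $\int_{\Omega}mv=0$ places its vertex at $c=0$, so $\max_{c\in\R}f(c)=f(0)=\int_{\Omega}mv^{2}$. I expect the bootstrap giving $\psi_{s}\in H^{2s}$, together with the slow growth of $\ln^{2}(d\mu_{k})$, to be the only mildly technical point; everything else is routine Fourier manipulation and elementary calculus.
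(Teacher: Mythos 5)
Your proposal is correct and follows essentially the same route as the paper: solving \eqref{problem:tildev} coefficientwise via $v_k=\ln(d\mu_k)(\psi_s)_k$, fixing $v_0$ by the constraint $\int_\Omega mv=0$ (possible since $\int_\Omega m\neq0$), obtaining $H^{s'}$-regularity from $\psi_s\in H^{2s}(\Omega)$ together with the slow growth of $\ln^2(d\mu_k)$, and reading \eqref{eq:vertice} off a downward parabola in $c$. The only cosmetic difference is that you re-derive $\psi_s\in H^{2s}(\Omega)$ directly from the eigenvalue equation, where the paper simply invokes Proposition \ref{prop:regularity}.
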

The function $v$ above will be crucial in
proving the following result, which introduces the abstract condition to
obtain our main results.
This condition will involve, in addition to $\la_{1}(m,d,s)$, the
eigenvalue $\la_{-1}(m,d,s)$
(whose existence is proved in Appendix \ref{app:eig})
characterized as follows
\beq\label{def:la-1}
-\la_{-1}(m,d,s)
=d^{s}\min_{H^s(\Omega)}\left\{\scal{ \lapneu{s}u}{u}
:
\into mu^{2}=-1,\,\into mu=0
\right\}.
\eeq
Notice that $\la_{-1}$ is actually a second eigenvalue and it may
not be simple. Indeed,
since $m$ has negative mean, the constant eigenfunction $\psi_0$ associated
with $\lambda_0=0$ satisfies
\[
\int_\Omega m \psi_0^2 <0.
\]
As a consequence, without imposing that $u$ is orthogonal to $m$ with respect to the $L^{2}(\Omega)$ scalar product,
the minimization problem \eqref{def:la-1}
has the solution
$\la_{0}=0$.
\\
We are now ready to state our abstract result.
\begin{theorem}\label{thm:squarepos}
Let $d>0$ be fixed and $\bar s \in (0,1)$ be such that
\begin{equation}\label{eq:critpt}
\frac{\partial}{\partial s} \lambda_1(d,\bar s)=0.
\end{equation}
If
\begin{equation}\label{eq:la1la-1}
-\la_{-1}(1,\bar s)>\la_{1}(1,\bar s),
\end{equation}
then $\bar s$ is a point of local maximum of the map $s\mapsto \lambda_1(d,s)$.
\end{theorem}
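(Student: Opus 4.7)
The plan is to establish $\ddot\lambda_1(d,\bar s)<0$, which together with \eqref{eq:critpt} will make $\bar s$ a strict local maximum of $s\mapsto\lambda_1(d,s)$. I build admissible test curves via Lemmas \ref{lem:curvetta}--\ref{lem:vtilde} and exploit a spectral identity to make the resulting upper bound on $\ddot\lambda_1$ sharp enough to detect the sign.

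Let $v$ be the function from Lemma \ref{lem:vtilde}. Differentiating the relation $d^s\lapneu{s}\psi_s=\lambda_1(d,s)m\psi_s$ in $s$, testing against $\psi_{\bar s}$, and using self-adjointness together with $\into m\psi_s^2=1$ yields $\dot\lambda_1(d,\bar s)=\scal{L_{\bar s}\psi_{\bar s}}{\psi_{\bar s}}$; substituting $L_{\bar s}\psi_{\bar s}=d^{\bar s}\lapneu{\bar s}v$ and using the eigenfunction equation converts this to $\lambda_1(d,\bar s)\into mv\psi_{\bar s}$, so the critical-point condition \eqref{eq:critpt} forces $\into mv\psi_{\bar s}=0$. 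For every $c\in\R$, the vector $w=cv$ then satisfies \eqref{eq:assumpt_curvetta} and Lemma \ref{lem:curvetta} provides a $C^2$ curve $u_c$ with $u_c(\bar s)=\psi_{\bar s}$, $\dot u_c(\bar s)=cv$, and $\into m u_c^2\equiv 1$. Setting $G_c(t):=d^t\scal{\lapneu{t}u_c(t)}{u_c(t)}$, the variational characterization \eqref{eq:def_eig} yields $G_c(t)\ge\lambda_1(d,t)$ with equality at $t=\bar s$, so $G_c''(\bar s)\ge\ddot\lambda_1(d,\bar s)$.

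Differentiating $G_c$ twice, using self-adjointness of $d^t\lapneu{t}$ and $L_t$ and \eqref{equation:ddotu} to evaluate the $\ddot u_c(\bar s)$ term, gives
\[
G_c''(\bar s)=\scal{T_{\bar s}\psi_{\bar s}}{\psi_{\bar s}}+4c\scal{L_{\bar s}\psi_{\bar s}}{v}+2c^2\scal{d^{\bar s}\lapneu{\bar s}v}{v}-2c^2\lambda_1(d,\bar s)\into mv^2.
\]
The key algebraic identity is
\[
\alpha:=\scal{T_{\bar s}\psi_{\bar s}}{\psi_{\bar s}}=\scal{L_{\bar s}\psi_{\bar s}}{v}=\scal{d^{\bar s}\lapneu{\bar s}v}{v}:
\]
the last two equalities are immediate from $d^{\bar s}\lapneu{\bar s}v=L_{\bar s}\psi_{\bar s}$ and self-adjointness, while the first one follows from expanding $v$ in the Neumann basis, since $d^{\bar s}\lapneu{\bar s}v=L_{\bar s}\psi_{\bar s}$ forces the coefficients $v_k=\ln(d\mu_k)(\psi_{\bar s})_k$ for $k\ge 1$, so all three quantities reduce to $\sum_{k\ge 1}(d\mu_k)^{\bar s}\ln^2(d\mu_k)(\psi_{\bar s})_k^2$. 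Substituting collapses the expression into
\[
G_c''(\bar s)=\alpha+4c\alpha+2c^2\bigl(\alpha-\lambda_1(d,\bar s)\into mv^2\bigr),
\]
a quadratic in $c$ which I then minimize.

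The Rayleigh characterization \eqref{eq:def_eig}, combined with $\into mv\psi_{\bar s}=0$ and the simplicity of $\lambda_1$, gives $\alpha-\lambda_1(d,\bar s)\into mv^2>0$ strictly (trivially so when $\into mv^2\le 0$), so the quadratic opens upward and its minimum equals $-\alpha(\alpha+\lambda_1(d,\bar s)\into mv^2)/(\alpha-\lambda_1(d,\bar s)\into mv^2)$; this is negative iff $\alpha+\lambda_1(d,\bar s)\into mv^2>0$. If $\into mv^2\ge 0$ the conclusion is immediate; in the remaining regime $\into mv^2<0$, the constraint $\into mv=0$ lets me apply \eqref{def:la-1} to $v$ and obtain $\alpha\ge\lambda_{-1}(d,\bar s)\into mv^2$, while the scaling $\lambda_{\pm 1}(d,\bar s)=d^{\bar s}\lambda_{\pm 1}(1,\bar s)$ together with hypothesis \eqref{eq:la1la-1} gives $\lambda_{-1}(d,\bar s)+\lambda_1(d,\bar s)<0$; multiplying this strictly negative factor by $\into mv^2<0$ produces $\alpha+\lambda_1(d,\bar s)\into mv^2\ge(\lambda_{-1}+\lambda_1)(d,\bar s)\into mv^2>0$, as required. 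The main obstacle in this plan is establishing the algebraic identity $\scal{d^{\bar s}\lapneu{\bar s}v}{v}=\scal{T_{\bar s}\psi_{\bar s}}{\psi_{\bar s}}$: it is precisely this identity that makes the curvature contribution from $T_{\bar s}$ cancel against the $v$-driven terms and reduces the sign analysis of $\ddot\lambda_1$ to the comparison between $\lambda_1$ and $|\lambda_{-1}|$ encoded in \eqref{eq:la1la-1}.
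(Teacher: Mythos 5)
Your proposal is correct and follows essentially the same route as the paper: the same test curve $u_c$ from Lemmas \ref{lem:curvetta}--\ref{lem:vtilde} with $\dot u_c(\bar s)=cv$, the same key identity $\scal{T_{\bar s}\psi_{\bar s}}{\psi_{\bar s}}=\scal{L_{\bar s}\psi_{\bar s}}{v}=\scal{d^{\bar s}\lapneu{\bar s}v}{v}$ via the Fourier coefficients of $v$, and the same use of \eqref{def:la-1} together with \eqref{eq:la1la-1} in the case $\into mv^2<0$. The only cosmetic difference is that you minimize the resulting quadratic in $c$ explicitly, whereas the paper just plugs in the two specific values $\alpha=-1$ and $\alpha=-[1-\lambda_1/\lambda_{-1}]^{-1}$.
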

\begin{proof}
Let us first note that, using \eqref{eq:psi_1} and \eqref{eq:critpt}, we have
\beq\label{critical:s}
0=\frac{\partial}{\partial s}\scal{d^s \lapneu{s} \psi_{s}}{\psi_{s}}|_{s=\bar s} =
2\scal{ d^{\bar s}\lapneu{\bar s}\psi_{\bar s}}{\dot \psi_{\bar s}}
+ \scal{ L_{\bar s}\psi_{\bar s}}{\psi_{\bar s}},
\eeq
where $\dot\psi_s = (d/ds) \psi_s$ and $L_{s}$ is defined in \eqref{eq:LeT}. We infer
that
\[
\begin{split}
\scal{ L_{\bar s}\psi_{\bar s}}{\psi_{\bar s}}& =-2\scal{  d^{\bar s}\lapneu{\bar s}\psi_{\bar s}}{\dot \psi_{\bar s}}=
-2\lambda_1(d,\bar s) \int_\Omega m \psi_{\bar s}\dot \psi_{\bar s}\\ & = -\lambda_1(d,\bar s) \frac{d}{ds} \int_\Omega m \psi_{s}^2
{\Big |}_{s=\bar{s}} =0.
\end{split}
\]
For $v$ as in Lemma \ref{lem:vtilde}, with $s=\bar s$, and $\alpha\in\R$, let $w = \alpha v$. We deduce that $w\in H^{\bar s + \ep}(\Omega)$
for $\ep>0$ small, and that
\[
\lambda_1(d,\bar s)\int_\Omega m\psi_{\bar s}w = \alpha \scal{ d^{\bar s} \lapneu{\bar s}v}{\psi_{\bar s}}
=\alpha \scal{ L_{\bar s}\psi_{\bar s}}{\psi_{\bar s}} = 0,
\]
that is $w $ satisfies \eqref{eq:assumpt_curvetta}. Thus Lemma \ref{lem:curvetta} applies, and we denote with
$u(s)$ the corresponding curve. Let us consider the map
$$
f(s):=\scal{ d^{s}\lapneu{s} u(s)}{u(s) },
$$
that, thanks to \eqref{equality:v}, satisfies
$$
f(s)\ge \lambda_1(d,s)\quad \text{and }\quad f(\bar s)=\lambda_1(d,\bar s).
$$
Then it will be enough to show that $\bar s$ is a maximum point
of $f$. By direct computation we obtain
\begin{align*}
f'(s)
=&
\scal{ L_{s}u(s)}{u(s)}
+2\scal{  d^{s}\lapneu{s}\dot u(s)}{ u(s)},
\\
f''(s)
=&
\scal{ T_{s}u(s)}{u(s)}
+4\scal{ L_{s}\dot u(s)}{ u(s)}
+2\scal{  d^{s}\lapneu{s}\dot u(s)}{\dot u(s)}
\\
&+2\scal{  d^{s}\lapneu{s}\ddot u(s)}{ u(s)},
\end{align*}
where $T_{s}$ is defined in \eqref{eq:LeT}. Notice that
\eqref{critical:s} implies that $f'(\bar s)=0$. Recalling \eqref{eq:coeffdiv}, we have
\[
\scal{ T_{\bar s}\psi_{\bar s}}{\psi_{\bar s}} = \sum_k(d\mu_k)^{\bar s} \ln^2(d\mu_k) a_k^2= \scal{ L_{\bar s}v}{\psi_{\bar s}} = \scal{  d^{\bar s}\lapneu{\bar s}v}{v} .
\]
On the other hand, \eqref{equation:ddotu} yields
\[
\scal{d^s \lapneu{s}\ddot u(s)}{ u(s)}|_{s=\bar s} =
\scal{ d^{\bar s} \lapneu{\bar s} \psi_{\bar s}}{\ddot u(\bar s)}
=-\lambda_1(d,\bar s)\into mw^{2}.
\]
Recalling that $w=\alpha v$ and using \eqref{equality:v},
we obtain
\beq\label{last}
f''(\bar s)=\scal{ d^{\bar s} \lapneu{\bar s} v}{v}
\left[1+4\alpha+2\alpha^2
\right]-2\alpha^2\lambda_1(d,\bar s) \into mv^{2}.
\eeq
Now, in case the last integral in \eqref{last} is nonnegative, then choosing  $\alpha=-1$ we obtain $f''(\bar s)<0$, namely $\bar s$ is a maximum point
for $f$ and the result follows. While, in case $\dys\into m v^{2}< 0$, we take into account \eqref{problem:tildev}
and we exploit the definition of $\la_{-1}(1,s)$ in
\eqref{def:la-1} to obtain
\[
\into mv^{2}\geq
\frac{1}{\la_{-1}(1,\bar s)}
\scal{ \lapneu{\bar s}v}{v}.
\]
Then, recalling Lemma \ref{lem:d1d2}, equation \eqref{last} becomes
\[
f''(\bar s)\leq
\scal{d^{\bar s} \lapneu{\bar s} v}{ v}
\left[1+4\alpha+2\alpha^{2}\left(
1- \frac{\la_{1}(1,\bar s)}{\la_{-1}(1,\bar s)}
\right)
\right]<0
\]
when choosing
\[
\alpha=-\left[1-\frac{\lambda_{1}(1,\bar s)}{\lambda_{-1}(1,\bar s)}\right]^{-1}.
\qedhere
\]
\end{proof}

The above theorem, which introduces our crucial abstract hypothesis
\eqref{eq:la1la-1} may not be immediately biologically interpreted;
however, in the next result we furnish a concrete, although more
restrictive, condition which is easier to check and better describes
our results from a biological point of view.
\begin{lemma}\label{lem:easier2check}
If $m\in {\mathcal M}$,  $m\geq -M$ and
\begin{equation}\label{eq:reduced main ass}
\lambda_{1}(m,1,1)<  \frac{\mu_1}{M}
\end{equation}
then
\begin{equation}\label{eq:la-1>la1}
-\lambda_{-1}(m,1,s) > \lambda_{1}(m,1,s)\qquad \text{for every }0<s\le1.
\end{equation}
\end{lemma}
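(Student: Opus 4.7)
The plan is to bound the two sides of the desired inequality separately by a common quantity, namely $\mu_1^s/M$. For $\lambda_1(m,1,s)$ I would use directly the upper estimate \eqref{alto} from Proposition \ref{prop:lambda1dasopra} with $d=1$, giving
\[
\lambda_1(m,1,s) \le \mu_1^{s-1}\lambda_1(m,1,1) < \mu_1^{s-1}\cdot\frac{\mu_1}{M}=\frac{\mu_1^s}{M}
\]
by the hypothesis \eqref{eq:reduced main ass}. So the main work is to show the matching lower bound $-\lambda_{-1}(m,1,s)\ge \mu_1^s/M$.

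For this, I would take an admissible competitor for the characterization \eqref{def:la-1} with $d=1$, i.e. $u\in H^s(\Omega)$ with $\into mu^2=-1$ and $\into mu=0$, and decompose it as $u=u_0+\tilde u$ with $\tilde u$ of zero average (so that the constraint $\into mu=0$ translates into $\into m\tilde u=-u_0\into m$). Expanding $\into mu^2$ then gives the identity
\[
\into m\tilde u^2 \;=\; -1+u_0^2\into m \;\le\; -1,
\]
since $\into m<0$. The hypothesis $m\ge -M$ allows to estimate $-\into m\tilde u^2\le M\into\tilde u^2$, whence $\into \tilde u^2\ge 1/M$.

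The final step is to apply the fractional Poincar\'e inequality \eqref{eq:poincar} to the zero-mean function $\tilde u$, together with the fact that the constant part $u_0$ lies in the kernel of $\lapneu{s}$, to deduce
\[
\scal{\lapneu{s} u}{u}=\scal{\lapneu{s}\tilde u}{\tilde u}\ge \mu_1^s\into \tilde u^2\ge \frac{\mu_1^s}{M}.
\]
Minimizing over admissible $u$ yields $-\lambda_{-1}(m,1,s)\ge \mu_1^s/M$, and combining with the upper bound on $\lambda_1(m,1,s)$ gives the strict inequality \eqref{eq:la-1>la1} for every $s\in(0,1]$.

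\textbf{Where the difficulty lies.} The argument is almost mechanical once one realises that the natural bridge between the two eigenvalues is the universal quantity $\mu_1^s/M$. The only delicate point is the manipulation of the constraints $\into mu^2=-1$ and $\into mu=0$: one has to use them \emph{together} (through the decomposition $u=u_0+\tilde u$) to convert the negative normalisation on $u$ into a \emph{positive} lower bound on $\int\tilde u^2$, using crucially the sign condition $m\ge -M$ and the fact that $\int m<0$. Without the orthogonality condition $\into mu=0$ the conclusion would fail, since constants would then be admissible and would give $-\lambda_{-1}=0$.
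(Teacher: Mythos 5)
Your proposal is correct and follows essentially the same route as the paper: both bound $\lambda_1(m,1,s)$ from above by $\mu_1^s/M$ via estimate \eqref{alto} and the hypothesis, and bound $-\lambda_{-1}(m,1,s)$ from below by the same quantity using the decomposition into a constant plus a zero-average part, the sign conditions $\int_\Omega m<0$ and $m\ge -M$, and the fractional Poincar\'e inequality \eqref{eq:poincar}. The only cosmetic difference is that the paper argues directly on the minimizing eigenfunction $\psi_{-1,s}$ rather than on a generic admissible competitor.
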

\begin{proof}
Let $\psi_{-1,s}$ be an eigenfunction associated with $\lambda_{-1}(m,1,s)$ as in \eqref{def:la-1}.
Writing $\psi_{-1,s} = h + \tilde \psi_{-1,s}$, with $h\in \R$ and
$ \tilde \psi_{-1,s}$ with zero average,  we have that
\[
\int_\Omega m \psi_{-1,s} = 0
\qquad\implies\qquad
h = -\frac{1}{m_0|\Omega|}\int_\Omega m \tilde\psi_{-1,s},
\]
where $m_0<0$ is given in \eqref{not:muk}. Then
\[
-1 = \int_\Omega m \psi_{-1,s}^2 = -\frac{1}{m_0|\Omega|}\left(\int_\Omega m \tilde\psi_{-1,s}\right)^2
 + \int_\Omega m \tilde \psi_{-1,s}^2
\geq -M\int_\Omega  \tilde \psi_{-1,s}^2.
\]
Recalling the Poincar\'e inequality \eqref{eq:poincar} we have
\[
1 \leq M \left\|\tilde \psi_{-1,s}^2\right\|^2_{L_2}  \leq \frac{M}{\mu_1^s} \scal{ \lapneu{s}\psi_{-1,s}}{\psi_{-1,s}}
= -\lambda_{-1}(m,1,s)\cdot \frac{M}{\mu_1^s}.
\]
By Proposition \ref{prop:lambda1dasopra} and \eqref{eq:reduced main ass} we finally infer
\[
\lambda_{1}(m,1,s)\leq \mu_1^{s-1}\lambda_{1}(m,1,1)< \mu_{1}^{s-1}\frac{\mu_1}{M}
\leq -\lambda_{-1}(m,d,s).
\qedhere
\]
\end{proof}
\begin{proof}[Proof of Theorem \ref{thm:main_intro}]
In view of Theorem \ref{thm:squarepos} and Lemma \ref{lem:easier2check} we have to check that,
if $m\in\tM$ and either \eqref{eq:mainassnew} or \eqref{eq:mainassA} hold, then
\eqref{eq:reduced main ass} follows.
Let $\lambda_1^{\text{Dir}}(B_\rho)=\rho^{-2}\lambda_1^{\text{Dir}}(B_1)$
denote the first Dirichlet eigenvalue of $-\Delta$ in $B_\rho$, with
eigenfunction $\eta\in H^1_0(B_\rho)\subset H^1(\Omega)$ normalized in $L^2(B_\rho)$. Considering
\[
\hat\eta(x) = \eta(x) -\frac{1}{m_0|\Omega|}\int_{B_\rho} m \eta,
\]
we obtain
\begin{align*}
\int_\Omega m\hat\eta^2
& =
 -\frac{1}{m_0|\Omega|}\left(\int_\Omega m \eta\right)^2
 + \int_\Omega m \eta^2
 \geq
 -\frac{\delta^2}{m_0|\Omega|}\left(\int_{B_\rho} \eta\right)^2
 + \delta
\\
&= -\frac{\delta^2}{m_0|\Omega|}\left(\frac{2|\partial B_1|}{\lambda_1^{\text{Dir}}(B_1)}\right)^{1/2}\rho^{N/2} + \delta \geq 0,
\end{align*}
where the last inequality holds because  $m_0<0$,
so that the left hand side is positive. Taking into account the equivalent
expression of $\lambda(m,1,1)$ in terms of the Rayleigh quotient
and using \eqref{eq:mconlapalla}, we infer
\[
\lambda_1(m,1,1) \leq \frac{\dys\int_\Omega|\nabla \hat\eta|^2}{\dys\int_\Omega m\hat\eta^2}
\leq\frac{\rho^{-2}\lambda_1^{\text{Dir}}(B_1)}{\dys-\frac{\delta^2}{m_0|\Omega|}\left(\frac{2|\partial B_1|}{\lambda_1^{\text{Dir}}(B_1)}\right)^{1/2}\rho^{N/2} + \delta}.
\]
Then \eqref{eq:reduced main ass} holds true once
\[
-\frac{1}{m_0|\Omega|}\,\left(\frac{2|\partial B_1|}{\lambda_1^{\text{Dir}}(B_1)}\right)^{1/2}
\frac{\mu_1 \delta^2\rho^{2+N/2}}{M} > \lambda_1^{\text{Dir}}(B_1) - \frac{\mu_1 \delta\rho^2}{M}.
\]
This is true in case \eqref{eq:mainassnew} holds,
since in such case the right hand side is negative; on the other hand, when the right hand side is positive,
the above inequality holds true under \eqref{eq:mainassA}, choosing
$A$ as in \eqref{eq:Aesp}.
\end{proof}
To conclude this section, we turn to discuss the periodic problem.
Let the spectral periodic fractional Laplacian
$(-\Delta_{\mathrm{per}})^{s}$ be defined as in
\eqref{eq;periodic_fraclap}. As we mentioned, the key argument to exploit our previous analysis is that, on periodic functions,
$(-\Delta_{\mathrm{per}})^{s}$ coincides with the
fractional Laplacian on the full space $(-\Delta_{\R^N})^{s}$ (see
\eqref{eq:fraclap_RN}). Actually, this was already observed in \cite{ancora_Stinga}, but for the reader's convenience we detail the argument here.
For easier notation, let us fix $l_1=\dots=l_N=2\pi$, i.e.
$\mathcal{C}_l=(0,2\pi)^N$. Using complex notation, we have that the periodic eigenfunctions
of $-\Delta$ in $(0,2\pi)^N$ are the functions $\varphi_k(x)=e^{-ik \cdot x}$, indexed by $k\in\Z^N$ and corresponding to the eigenvalues $\nu_k=|k|^2$. Let $u$ be $(0,2\pi)^N$-periodic;
up to normalization factors we obtain
\begin{equation}\label{eq:per1}
u(x)= \sum_{k\in\Z^N} u_k e^{-ik \cdot x}, \qquad\text{where }u_k =\int_{\mathcal{C}_l}u(x) e^{-ik \cdot x}\,dx,
\end{equation}
and consequently
\[
(-\Delta_{\mathrm{per}})^{s} u(x)= \sum_{k\in\Z^N} |k|^{2s} u_k e^{-ik \cdot x}.
\]
\begin{proof}[Proof of  Proposition \ref{prop:periodic_is_spectral}]
Let $f\from\R\to\R$ be continuous and
$2\pi$-periodic; then $f$ belongs to the space $\mathcal{S}'(\R)$ of tempered distributions, and it is well known (see e.g. \cite[Ch. II]{MR0166596}) that its Fourier transform is, up to normalization factors,
\[
\hat f (\xi)= \sum_{k\in\Z} f_k \delta_1 (\xi - k), \ \ \xi\in\R,\qquad\text{where }f_k =\int_0^{2\pi}f(x) e^{-ikx}\,dx,
\]
and $\delta_n$ denotes the Dirac delta in $\R^{n}$, $n\geq 1$. Indeed, it suffices to transform
both sides of the identity
\[
f(x)= \sum_{k\in\Z} f_k e^{-ik x},
\]
which holds true in $\mathcal{S}'(\R)$. Recalling that
\[
\delta_N(x_1,\dots,x_N)=\delta_1(x_1)\otimes \dots \otimes\delta_1(x_N),
\]
it is not difficult to generalize the above formula to the $N$-dimensional setting,
obtaining that, if $u$ is $(0,2\pi)^N$-periodic, then
\[
\hat u (\xi)= \sum_{k\in\Z^N} u_k \delta_N (\xi - k), \ \ \xi\in\R^N,
\]
and $u_{k}$ are given in \eqref{eq:per1}.
Exploiting \eqref{eq:fraclap_RN}, \eqref{eq;periodic_fraclap} and recalling that
$\nu_{k}=|k|^{2}$,  we obtain:
\[
\widehat{(-\Delta_{\R^N})^s u} (\xi) = |\xi|^{2s} \widehat{u}(\xi) = \sum_{k\in\Z^N} |\xi|^{2s} u_k \delta_N (\xi - k)
= \sum_{k\in\Z^N} |k|^{2s} u_k \delta_N (\xi - k) = \widehat{(-\Delta_{\mathrm{per}})^s u} (\xi),
\]
and the desired result follows.
\end{proof}
Once the equivalence between $(-\Delta_{\mathrm{per}})^{s}$ and $(-
\Delta_{\R^N})^{s}$ is established, one can easily repeat the
arguments introduced for the Neumann case, because
$(-\Delta_{\mathrm{per}})^{s}$ is a spectral operator as pointed out
in \eqref{eq;periodic_fraclap}: since $\nu_0=0$ and
$\nu_k\to+\infty$ as $|k|\to \infty$, these arguments are exactly the
same, except for the use of the regularity results from \cite{cast} (see Appendix
\ref{app:eig}): following the arguments in \cite{cast},
these results can be proved also for $(-\Delta_{\mathrm{per}})^{s}$,
even though in this case it is much easier to use the regularity theory
for  $(-\Delta_{\R^N})^{s}$, which is well established in
\cite{cabresire}.
Hence,  the whole argument of the Neumann case can be followed
also in the periodic case, without any change, but replacing
$(\mu_k,\phi_k)_{k\in\Z}$
with $(\nu_k,\varphi_k)_{k\in\Z^N}$. In particular, the proof of Theorem \ref{thm:main_intro_per} follows as well.

\section{Optimization on \texorpdfstring{$m$}{m}}\label{sec:opt_m}

In this section we will briefly analyse  the optimization of
$\lambda_{1}(m,d,s)$ with respect to $m$.
In this analysis it is convenient to fix $\underline{m},\,
\overline{m}\in \R^{+}$, $m_{0}\in (-\underline{m},0)$ and
take $m$ in the following class.
\begin{equation}\label{ipommm}
m\in\overline{{\mathcal M}}:=\left\{-\underline{m}\leq m(x)\leq \overline{m},
\,\into m=m_{0}|\Omega|,
  \,  m^+ \not\equiv 0\right\}.
\end{equation}
\begin{remark}\label{rem:51}
When $m$ satisfies \eqref{ipommm}, condition \eqref{basso}
can be rewritten as
\[
\la_{1}(m,d,s)\geq \frac{d^{s}\mu_{1}^{s}|m_{0}|}{\overline{m}|m_{0}|
+[\max(\overline{m},\underline{m})]^{2}}.
\]
\end{remark}
The following result is proved in \cite{lilo,dego,loya}.
\begin{lemma}\label{bathtub}
Let $f\in L^{1}(\Omega) $.
Then the maximization problem
$\dys
\sup_{m\in\overline{{\mathcal M}}}\int_\Omega fm
$
is solved by
\beq\label{eq:bang}
m=\overline{m}\chi_{D}-\underline{m}\chi_{D^{c}},
\eeq
for some subsets $D\subset \Omega$, $D^{c}=\Omega\setminus D$, such that
\beq\label{eq:D}
|D|=|\Omega|\frac{\underline{m}+m_{0}}{\underline{m}+\overline{m}}.
\eeq
\end{lemma}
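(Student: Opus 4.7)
The plan is to apply the classical bathtub principle. First, I would establish existence of a maximizer: the class $\overline{{\mathcal M}}$ is convex, bounded in $L^\infty(\Omega)$, and weakly-$*$ closed (being cut out by pointwise bounds and a single linear integral constraint), hence weakly-$*$ compact by Banach--Alaoglu. Since $f\in L^1(\Omega)$, the functional $m\mapsto \int_\Omega fm$ is weakly-$*$ continuous, so a maximizer exists.

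Next, I would construct an explicit bang-bang candidate using level sets of $f$. Because $-\underline{m}<m_0<0<\overline{m}$, the target measure $|\Omega|(\underline{m}+m_0)/(\underline{m}+\overline{m})$ lies strictly between $0$ and $|\Omega|$. The distribution function $t\mapsto |\{f>t\}|$ is non-increasing and right-continuous with limits $|\Omega|$ at $-\infty$ and $0$ at $+\infty$, so we may pick $c\in\R$ with
\[
|\{f > c\}|\leq |\Omega|\frac{\underline{m}+m_0}{\underline{m}+\overline{m}}\leq |\{f \geq c\}|.
\]
Choose a measurable $E\subset \{f=c\}$ (possibly empty) such that $D:=\{f>c\}\cup E$ has exactly the measure prescribed by \eqref{eq:D}, and set $m^{*}:=\overline{m}\chi_D-\underline{m}\chi_{\Omega\setminus D}$. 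The choice of $|D|$ forces $\int_\Omega m^{*}=m_0|\Omega|$; since $|D|>0$ we also have $(m^{*})^+\not\equiv 0$, and thus $m^{*}\in\overline{{\mathcal M}}$.

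Finally, I would verify optimality by a pointwise comparison. For any $m\in\overline{{\mathcal M}}$, the equality $\int_\Omega m^{*} = \int_\Omega m = m_0|\Omega|$ lets us shift $f$ by the threshold constant $c$:
\[
\int_\Omega f(m^{*}-m) = \int_\Omega (f-c)(m^{*}-m).
\]
On $D$ we have $m^{*}=\overline{m}\geq m$ and $f\geq c$, while on $\Omega\setminus D$ we have $m^{*}=-\underline{m}\leq m$ and $f\leq c$; hence the integrand is non-negative pointwise, yielding $\int_\Omega f m^{*}\geq \int_\Omega f m$. This proves that $m^{*}$ solves the maximization problem and has the claimed form with $|D|$ as in \eqref{eq:D}. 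The only genuinely subtle step is the correct handling of a possible atom of $f$ at the threshold $c$, which is accommodated by splitting the level set $\{f=c\}$ through the auxiliary set $E$; everything else is a direct level-set rearrangement.
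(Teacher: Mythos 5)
Your proof is correct and is essentially the classical bathtub-principle argument; the paper itself does not prove this lemma but simply cites the references (Lieb--Loss, Lou--Yanagida, Derlet--Gossez--Tak\'a\v{c}), and your level-set construction together with the pointwise comparison $\int_\Omega(f-c)(m^*-m)\ge 0$ is exactly the standard argument found there. One small inaccuracy: the class $\overline{\mathcal M}$ is \emph{not} weakly-$*$ closed, because the constraint $m^+\not\equiv 0$ is an open condition (e.g.\ an oscillating bang-bang sequence can converge weakly-$*$ to the negative constant $m_0$), so the Banach--Alaoglu existence step as stated is flawed; but this is harmless, since your explicit candidate $m^*$ lies in $\overline{\mathcal M}$ and the direct comparison proves its optimality over the whole class without any compactness argument, so you could simply delete that first paragraph.
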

\begin{theorem}\label{th:opm}
For every $d>0$ and
$s$ fixed, there exists $\underline{\la}_{1}(d,s)$
solution of the minimization problem
\beq\label{infm}
\underline{\la}_{1}(d,s)=\inf_{\overline{{\mathcal M}}}\la_{1}(m,d,s).
\eeq
Moreover, $\underline{\la}_{1}(d,s)$ is achieved by
$m=\overline{m}\chi_{D}-\underline{m}\chi_{D^{c}}$, for some $D\subset \Omega$, independent of $d$,
which satisfies \eqref{eq:D}.
\end{theorem}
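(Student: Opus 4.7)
The plan is to combine a direct method for existence with the bathtub principle of Lemma \ref{bathtub} to force the bang-bang structure. Since Lemma \ref{lem:d1d2} gives $\la_1(m,d,s)=d^{s}\la_1(m,1,s)$ with the eigenfunctions themselves independent of $d$, the whole optimization is $d$-independent; this both allows me to fix $d=1$ throughout and explains why the optimal $D$ will not depend on $d$.

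First I would establish existence by the direct method. Pick a minimizing sequence $m_n\in\overline{\mathcal M}$ with associated positive principal eigenfunctions $\psi_n$ normalised by $\int_\Omega m_n\psi_n^{2}=1$, so that $\scal{\lapneu{s}\psi_n}{\psi_n}=\la_1(m_n,1,s)\le C$. Uniform boundedness in $L^\infty$ gives, up to a subsequence, $m_n\rightharpoonup^{*}m^{*}$ in $L^\infty(\Omega)$; the pointwise bounds $-\underline m\le m^{*}\le\overline m$ and the linear constraint $\int_\Omega m^{*}=m_0|\Omega|$ pass to the limit. The Poincar\'e inequality \eqref{eq:poincar} bounds the zero-mean part $\tilde\psi_n:=\psi_n-(\psi_n)_0$ in $H^s(\Omega)$, while expanding the normalisation as
\beq
1=(\psi_n)_0^{2}\,m_0|\Omega|+2(\psi_n)_0\int_\Omega m_n\tilde\psi_n+\int_\Omega m_n\tilde\psi_n^{2},
\eeq
together with $m_0<0$, forces $(\psi_n)_0$ to be bounded as well. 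Hence $\psi_n$ is bounded in $H^s(\Omega)$ and, by compactness, $\psi_n\to\psi^{*}$ strongly in $L^2$; pairing strong $L^1$-convergence of $\psi_n^{2}$ with the weak-$*$ convergence of $m_n$ yields $\int_\Omega m^{*}(\psi^{*})^{2}=1$. In particular $(m^{*})^{+}\not\equiv 0$, so $m^{*}\in\overline{\mathcal M}$, and weak lower semicontinuity of the spectral quadratic form $u\mapsto\scal{\lapneu{s}u}{u}$ gives
\beq
\la_1(m^{*},1,s)\le\scal{\lapneu{s}\psi^{*}}{\psi^{*}}\le\liminf_n\la_1(m_n,1,s)=\underline{\la}_1(1,s),
\eeq
so $m^{*}$ attains the infimum and, by equality throughout, $\psi^{*}$ is a principal eigenfunction associated with $m^{*}$.

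The second step identifies the bang-bang structure. For any $\hat m\in\overline{\mathcal M}$ with $\int_\Omega\hat m(\psi^{*})^{2}>0$, renormalising $\psi^{*}$ and plugging it into the Rayleigh formulation \eqref{eq:def_eig} gives
\beq
\la_1(\hat m,1,s)\le\frac{\scal{\lapneu{s}\psi^{*}}{\psi^{*}}}{\int_\Omega\hat m(\psi^{*})^{2}}=\frac{\la_1(m^{*},1,s)}{\int_\Omega\hat m(\psi^{*})^{2}}.
\eeq
Combined with the optimality $\la_1(\hat m,1,s)\ge\la_1(m^{*},1,s)$, this forces $\int_\Omega\hat m(\psi^{*})^{2}\le 1=\int_\Omega m^{*}(\psi^{*})^{2}$; the inequality is trivial when $\int_\Omega\hat m(\psi^{*})^{2}\le 0$, so $m^{*}$ maximises $\hat m\mapsto\int_\Omega\hat m(\psi^{*})^{2}$ over $\overline{\mathcal M}$. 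Applying Lemma \ref{bathtub} with $f=(\psi^{*})^{2}\ge 0$ then identifies $m^{*}=\overline m\chi_D-\underline m\chi_{D^{c}}$, where $D=\{(\psi^{*})^{2}\ge t_{*}\}$ for the threshold $t_{*}$ making $|D|$ satisfy \eqref{eq:D}.

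The hard part will be the passage to the limit in the direct method: I need to guarantee that $m^{*}$ does not degenerate (so that problem \eqref{eq:eigen_u} still admits a positive principal eigenvalue) and that the normalisation $\int m_n\psi_n^{2}=1$ is inherited by $(m^{*},\psi^{*})$. Both ingredients rest on the compact embedding $H^s(\Omega)\hookrightarrow L^2(\Omega)$ applied to $\psi_n$, whose delicate point is the bound on the mean $(\psi_n)_0$ extracted from the sign of $m_0$; once $\psi_n$ is under control in $H^s$, the rest of the argument is essentially soft.
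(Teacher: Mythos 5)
Your argument is correct, but it follows a genuinely different route from the paper's. The paper does not run a direct method at all: for an \emph{arbitrary} $m\in\overline{\mathcal M}$ with principal eigenfunction $\psi_1=\psi_1(m)$, it applies Lemma \ref{bathtub} with $f=\psi_1^2$ to produce a bang-bang competitor $m_D$ satisfying $\int_\Omega m_D\psi_1^2\ge\int_\Omega m\psi_1^2=1$, and then uses $\psi_1$ as a test function in \eqref{eq:def_eig} to get $\la_1(m_D,d,s)\le\la_1(m,d,s)$; this reduces the infimum over $\overline{\mathcal M}$ to the bang-bang subclass in three lines, with the attainment left rather implicit. You instead prove attainment head-on (minimizing sequence, weak-$*$ compactness of $m_n$, the $H^s$ bound on $\psi_n$ via \eqref{eq:poincar} plus the quadratic identity for $(\psi_n)_0$ exploiting $m_0<0$, strong $L^2$ convergence, and weak lower semicontinuity of the form), and only then invoke the bathtub principle through the first-order optimality of $m^*$. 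Your route is longer but makes the existence of a minimizer explicit, which is a genuine gain over the paper's write-up; both arguments share the same two engines (Lemma \ref{bathtub} and the Rayleigh characterization \eqref{eq:def_eig}), and both correctly get the $d$-independence of $D$ from Lemma \ref{lem:d1d2}. One small over-claim at the very end: Lemma \ref{bathtub} guarantees that \emph{some} maximizer of $\hat m\mapsto\int_\Omega\hat m(\psi^*)^2$ is of the form $\overline m\chi_D-\underline m\chi_{D^c}$, not that \emph{every} maximizer is; if $|\{(\psi^*)^2=t_*\}|>0$ your $m^*$ could take intermediate values there. This does not affect the theorem: the bang-bang maximizer $m_D$ supplied by the lemma satisfies $\int_\Omega m_D(\psi^*)^2\ge 1$, so testing \eqref{eq:def_eig} with $\psi^*$ gives $\la_1(m_D,1,s)\le\scal{\lapneu{s}\psi^*}{\psi^*}=\underline\la_1(1,s)$, i.e.\ the infimum is achieved by a bang-bang weight as claimed — but you should phrase the conclusion this way rather than asserting $m^*$ itself is bang-bang.
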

\begin{proof}
Notice that, for every $m\in \overline{{\mathcal M}}$, Lemma \ref{bathtub} implies
\begin{align*}
\la_{1}(m,d,s)=d^{s}\frac{\scal{(-\Delta)^{s}\psi_{1}}{\psi_{1}
}}{\into m\psi_{1}^{2}}
&\geq
d^{s}\frac{\scal{(-\Delta)^{s}\psi_{1}}{\psi_{1}
}}{\into \left(\overline{m}\chi_{D}-\underline{m}\chi_{D^{c}}\right)
\psi_{1}^{2}}
\\
&\geq\la_{1}(\overline{m}\chi_{D}-\underline{m}\chi_{D^{c}},d,s),
\end{align*}
for $D$ satisfying \eqref{eq:D}. Since $\overline{m}\chi_{D}-\underline{m}\chi_{D^{c}}\in \overline{{\mathcal M}}$ the conclusion
follows.
\end{proof}
{\bf Biological Interpretation.} Theorem
\ref{th:opm} gives a partial answer to the question of
the best way to model an environment to favor survival.
As observed in \cite{loya, MR1014659}, minimizing $\lambda_{1}$
with respect to $m$ is equivalent to determine the optimal spatial
arrangement of the favorable and unfavorable parts of the habitat
for the species to survive. In Theorem \ref{th:opm} we observe that,
also in the case of long jumps diffusion, the optimal habitat is of bang-bang type
(even though the actual shape of the favorable set $D$ may depend on the diffusion
strategy adopted by the population).
\begin{remark}
The above result holds true also in the periodic setting;
we refer the interested reader to \cite{MR2364810}, where the relation
between the Neumann case and the periodic one has been analyzed in details, for $s=1$.
\end{remark}

\section{Discussion}\label{sec:final}

Reaction-diffusion equations are a well established mathematical model for the description 
of population dispersal \cite{MR1952568}. In the model \eqref{eq:diff_eq_neu} that we analyze, three main 
features are present: 
\begin{itemize}
 \item the population moves according to a random walk either of Brownian type ($s=1$), or allowing for long jumps ($s<1$);
 \item the boundary of the domain acts as a ``reflecting barrier'' (Neumann boundary conditions);
 \item the motility coefficient depends on the fractional power $s$.
\end{itemize}
In the context of fractional models, boundary conditions have to be incorporated in the operator, and 
different interpretations of no flux boundary conditions are available \cite{MR3217703}. Our model 
deals with 
a reflecting boundary, but we can treat the fractional Laplacian on the
whole $\R^N$ in periodic environments as well. Moreover, although the choice of a motility 
coefficient depending on $s$ is motivated by dimensional arguments \cite{MeKa}, we can also treat 
constant diffusivities. 

It is well known \cite{beroro} that the principal positive eigenvalue $\lambda_{1}(m,d,s)$, 
associated to the linearized, stationary version of \eqref{eq:diff_eq_neu}, acts as a threshold value
for the persistence of the population: solutions to \eqref{eq:diff_eq_neu} survive (i.e. they tend to 
the unique positive steady state, as $t\to+\infty$) if and only if $\lambda_1(m,d,s) < 1$, while in the complementary case they extinguish (i.e. they tend to $0$). Therefore minimizing 
$\lambda_1(m,d,s)$, with respect to the parameters involved, amounts to boost the survival chances of the species.

Our optimization analysis deals with the following two problems:
\begin{enumerate}
\item
{\bf Optimal strategy problem:}
given a population and its habitat, is the Brownian hunting strategy more effective than the fractional one, in order to survive?
\item{\bf Optimal design problem for wildlife refugees:}
given a population, what is the best way  to model
 the environment in order to boost survival?
\end{enumerate}


Our main results are concerned with question (1).
In particular, in Theorem  \ref{prop:dep_on_d}
we analyze the case of  very large/small environments:  
in sizeable environments the local diffusion is more successful, while in very 
small ones a fractional diffusion strategy would be preferable. This 
result suggests the presence of a transition situation, which is analyzed 
in Theorems \ref{thm:main_intro}, \ref{thm:main_intro_per}. 
Here we show that, assuming that the habitat is either not
too fragmented or not too hostile in average, the optimal diffusion strategy is always the Brownian one, or the
fractional with the smallest admissible exponent, depending on the relative sizes of the spatial domain and of 
the motility.
This phenomenon actually relies on an abstract comparison
condition involving the eigenvalues $\lambda_{1}$ and
$\lambda_{-1}$ (equation \eqref{eq:la-1>la1}).
It is then natural to wonder whether or not this condition is merely
technical; equivalently, are  there habitats in which the optimal
strategy is attained by a fractional diffusion with exponent
$\overline{s}\in (0,1)$?
We provide some numerical simulations
that suggest that this can actually occur, when assumption \eqref{eq:la-1>la1} is violated.

In the square $\Omega = (0,\pi)\times(0,\pi) \subset \R^2$ we consider the two environments
\begin{equation}\label{eq:model_env}
m_{1}(x_1,x_2) :=
\begin{cases}
8  & \,x_1^2+x_2^2 <  1\\
 -1  & \,x_1^2+x_2^2 >  1,\\
\end{cases}
\qquad
m_{2}(x_1,x_2) :=
\begin{cases}
1  & \,x_1^2+x_2^2 <  1\\
 -1  & \,x_1^2+x_2^2 >  1.\\
\end{cases}
\end{equation}
Notice that, with both choices,
$m\in\tM$. For the two possibilities, the eigenvalues $\lambda_{1}(m,1,s)$,
$\lambda_{-1}(m,1,s)$ are numerically evaluated, by truncating
the Fourier series, for
$s\in\{i/100:i=1,\dots,100\}$.
As shown in Figure \ref{fig:cond}, condition  \eqref{eq:la-1>la1}
appears to be satisfied for $m=m_{1}$, whereas
it does not hold when $m=m_{2}$.
However, notice also that in this case
$\mu_1=\mu_2=1$ (achieved by $\phi_1(x)=\cos x_1$,
$\phi_2(x)=\cos x_2$), so that we are in the situation
described by the first conclusion of  Theorem \ref{prop:dep_on_d},
namely,  all the graphs are increasing in $s$.
Then in this case the minimum of $\lambda_{1}(m,1,s)$
is achieved in $\lambda_{1}(m,1,0^{+})$, no matter
whether or not condition  \eqref{eq:la-1>la1} is verified.
\begin{center}
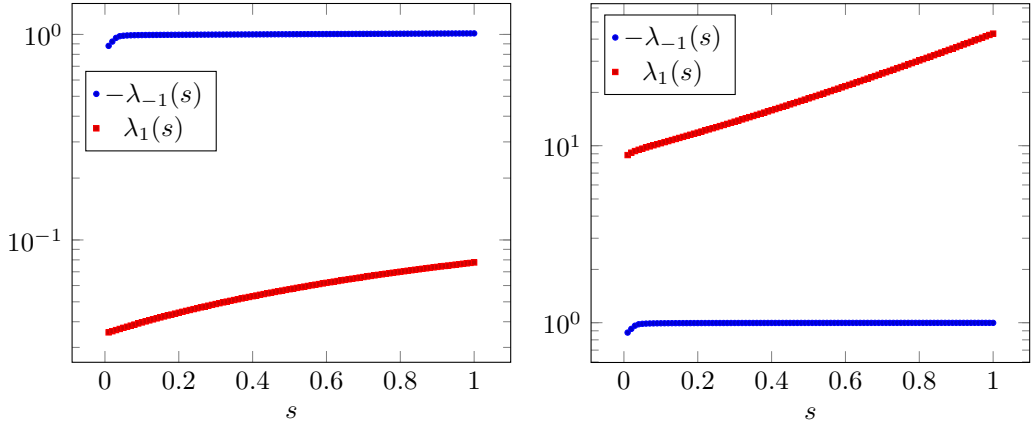
\begin{figure}
\begin{tikzpicture}[baseline]
\pgfplotsset{every axis/.append style={width=0.5\linewidth}}
\begin{axis}[
     name=true,
    ymode=log,
    xlabel=$s$,
    legend style={at={(0.03,0.7)},anchor=west},
    legend entries={$-\lambda_{-1}(s)$,$\lambda_{1}(s)$}
    ]
\addplot+[
    only marks,
    mark size=1pt,
    filter discard warning=false,
    ]
table[x={s},y expr={-\thisrow{lntrue}}]{data.dat};
\addplot+[
    only marks,
    mark size=1pt,
    filter discard warning=false,
    ]
table[x={s},y={lptrue}]{data.dat};
\end{axis}
\begin{axis}[
   at={(true.outer north east)},anchor=outer north west,
    xshift=3mm,
    name=false,
    ymode=log,
    xlabel=$s$,
    legend pos=north west,
    legend entries={$-\lambda_{-1}(s)$,$\lambda_{1}(s)$}
    ]
\addplot+[
    only marks,
    mark size=1pt,
    filter discard warning=false,
    ]
table[x={s},y expr={-\thisrow{lnfalse}}]{data.dat};
\addplot+[
    only marks,
    mark size=1pt,
    filter discard warning=false,
    ]
table[x={s},y={lpfalse}]{data.dat};
\end{axis}
\end{tikzpicture}%
\caption{Testing condition \protect\eqref{eq:la-1>la1} for the model environment
\protect\eqref{eq:model_env}
with $m=m_{1}$ (on the left) and $m=m_{2}$ (on the right).}
\label{fig:cond}
\end{figure}
\end{center}
In Figure \ref{fig:result},
$\lambda_{1}(m,d,s)=d^s\lambda_{1}(m,1,s)$ is plotted
for different choices of the motility coefficient $d<1$.
When $m=m_{1}$ (so that condition \eqref{eq:la-1>la1} is
satisfied) it is possible to observe the transition
of the behavior of $\lambda_{1}(m,1,s)$ from
decreasing to  increasing (while $d$ increases)
developing in the meanwhile a critical point of maximum type.
But, when $m=m_{2}$, in which case condition \eqref{eq:la-1>la1}
is violated, $\lambda_{1}(m,1,s)$ develops also critical points of minimum type, while moving from  decreasing to increasing. Figure \ref{fig:result} suggests a further line of investigation.
Indeed, it would be interesting to find the proper conditions under
which $\lambda_{1}(m,d,s)$ produces internal minimum points,
because this would lead to an optimal diffusion strategy of fractional 
type, but not close to zero.
\begin{center}
\begin{figure}
\begin{tikzpicture}[baseline]
\pgfplotsset{every axis/.append style={width=0.5\linewidth}}
\begin{axis}[
     name=true,
   legend pos=north west,
    legend columns=2,
    xlabel=$s$,
    ylabel={$\lambda_1(m,d,s)$},
    legend entries={$d=0.2$,$d=0.4$,$d=0.6$,$d=0.8$}
    ]
\foreach \c in {.2,.4,...,.8}
{
\addplot+[
    only marks,
    mark size=1pt,
    filter discard warning=false,
    ]
table[x index=0,y expr={(\c)^(\thisrow{s})*\thisrow{lptrue}}] {data.dat};
}
\end{axis}
\begin{axis}[
   at={(true.outer south east)},anchor=outer south west,
    xshift=3mm,
    name=false,
   legend pos=north west,
    legend columns=2,
    xlabel=$s$,
    legend entries={$d=0.16$,$d=0.18$,$d=0.20$,$d=0.22$,$d=0.24$}
    ]
\foreach \c in {.16,.18,...,.24}
{
\addplot+[
    only marks,
    mark size=1pt,
    filter discard warning=false,
    ]
table[x index=0,y expr={(\c)^(\thisrow{s})*\thisrow{lpfalse}}] {data.dat};
}
\end{axis}
\end{tikzpicture}%
\caption{The map $s\mapsto\lambda_1(m,d,s)$, for several values of $d
$, with $m=m_{1}$ (on the left) and $m=m_{2}$ (on the right)
both given in \protect\eqref{eq:model_env}.}
\label{fig:result}
\end{figure}
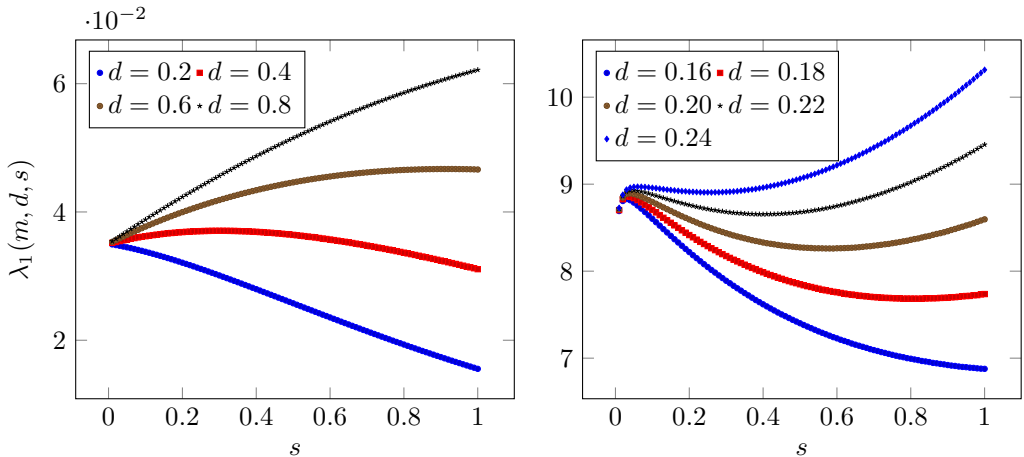
\end{center}

Question (2) is discussed in Section \ref{sec:opt_m},
where we show that the optimal habitat is of bang-bang type
also in the case of long jumps diffusion (for standard diffusions this is already well known). 
However, the actual shape of the favorable set $D$ may depend on the 
diffusion strategy adopted by the population. As a consequence,
it is natural to investigate the further qualitative properties
of the favorable region $D$, in particular, to wonder  
whether or not $D$ is connected,
as this is related to  the detection of possible fragmentation of the
optimal environment.
The connectedness of $D$ has been  obtained in the local diffusion case, for
$N=1$, in  \cite{MR1105497,loya,dego}. This line of research has been pursued  in higher dimension in \cite{MR2364810}, where a sharp analysis of the optimal environment is performed in the standard diffusion case $s=1$. In particular, when $\Omega$ is a bi-dimensional
rectangle, by combining monotone Steiner rearrangements and numerical simulations it appears that $D$ and $\Omega\setminus D$ can be of two main types: ball-shaped or stripe-shaped.
In addition, when the ratio $|D|/|\Omega\setminus D|$ is sufficiently small, it was conjectured that $D$ should be a quarter of circle, centered in one of the corners of $\Omega$. Nonetheless,
such conjecture has been recently disproved in \cite{Privat}.
By using symmetrization arguments on the extension problem (Remark \ref{rem:extension}), we expect that part of such analysis may be carried also to the case $s<1$, even though this falls
beyond the scope of the present paper.


\appendix \section{Existence and Properties of \texorpdfstring{$\lambda_{1}(m,d,s)$}{the first eigenvalue}. }\label{app:eig}

In this appendix we prove the existence of two sequences of eigenvalues of the operator
$\lapneu{s}$ and we show  the  main properties of
$\lambda_{1}(m,d,s)$ and of its associated eigenfunction.

Taking into account the $L^2$-spectral decomposition (see \eqref{not:muk}), we consider the functional space
\begin{equation}\label{eq:defHs}
H^s(\Omega) = \left\{u = u_0 + \sum_{k=1}^{\infty} u_k \phi_k
\in L^2(\Omega)\, : \,
\sum_{k=1}^{\infty} \mu_k^s u_k^2<+\infty\right\}.
\end{equation}
Note that, as shown in \cite[Lemma 7.1]{cast}, this definition of $H^s(\Omega)$ is equivalent to
the usual one given in terms of the Gagliardo semi-norm
\[
[u]^2_{2,s} = \int_\Omega \int_\Omega \frac{|u(x)-u(y)|^2}{|x-y|^{N+2s}}\,dxdy.
\]
In $H^s(\Omega)$ it is well-defined the fractional differential operator
\begin{equation}\label{eq:def_frac_lap}
\lapneu{s} u = \sum_{k=1}^{\infty} \mu_k^s u_k\phi_k,
\end{equation}
and, taking into account \eqref{not:muk},
the norm in $H^s(\Omega)$ can be written as
\begin{equation}\label{eq:def_oper}
\| u \|_{H^s}^2 = u_0^2 + \sum_{k=1}^{\infty} \mu_k^s u_k^2 =
\left(\frac{1}{|\Omega|}\int_\Omega u \right)^2 + \scal{ \lapneu{s}u}{u}.
\end{equation}
In the following, we will prove the existence of a double sequence of
eigenvalues for problem \eqref{eq:eigen_u}, and some qualitative properties
of the eigenfunctions.
\begin{proposition}\label{prop:lambdak}
Problem \eqref{eq:eigen_u} admits two unbounded sequences
of eigenvalues:
\[
\dots \leq \lambda_{-2} \leq \lambda_{-1} < \lambda_0 = 0 <\lambda_1 < \lambda_2 \leq \lambda_3 \leq \dots
\]
Furthermore, both the eigenvalues and the
(normalized) eigenfunctions depend continuously on $m$.
In particular, \eqref{eq:def_eig} holds.
\end{proposition}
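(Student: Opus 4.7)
The plan is to adapt the classical Hess--Kato theory for indefinite-weight eigenvalue problems to the spectral fractional setting, using as the key compactness ingredient the embedding $H^s(\Omega)\hookrightarrow L^2(\Omega)$ (valid in bounded Lipschitz domains, see \cite{cast}). I would first observe that the form $\scal{\lapneu{s}u}{u}$ is positive semi-definite on $H^s(\Omega)$ with kernel spanned by the constants, while $\into muv$ is indefinite; testing the eigenvalue equation against the constant $1$ shows that any eigenfunction with non-zero eigenvalue automatically satisfies $\into mu=0$.

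To produce $\lambda_1$ I would prove existence of a minimizer for the variational problem \eqref{eq:def_eig}. The admissible set is non-empty because $m^+\not\equiv 0$, and for a minimizing sequence $u_n=(u_n)_0+\tilde u_n$ the Poincar\'e inequality \eqref{eq:poincar} controls $\|\tilde u_n\|_{L^2}$. Expanding
\[
1=\into m u_n^2 = m_0|\Omega|(u_n)_0^2 + 2(u_n)_0\into m\tilde u_n + \into m\tilde u_n^2
\]
and using $m_0<0$, one sees that $(u_n)_0$ must be bounded (otherwise the right-hand side would tend to $-\infty$), hence $\{u_n\}$ is bounded in $H^s(\Omega)$. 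Compactness plus weak lower semicontinuity yield a minimizer; Lagrange multipliers give $d^s\lapneu{s}u=\lambda_1 mu$; and positivity of $\lambda_1$ is immediate, since $\lambda_1=0$ would force the minimizer to be constant, contradicting $\into mu^2=1$ in view of $\into m<0$.

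For the full sequence of positive eigenvalues I would employ a Ljusternik--Schnirelmann minimax principle over $k$-dimensional subspaces of the manifold $\{\into mu^2=1\}$, or equivalently iterate the minimization imposing weighted orthogonality to the previously constructed eigenfunctions; compactness of the embedding forces $\lambda_k\to+\infty$. The negative eigenvalues are treated symmetrically with the constraint $\into mu^2=-1$, together with the additional orthogonality $\into mu=0$, without which, as the excerpt notes, the infimum would be trivially attained at a constant with eigenvalue $0$. Continuity in $m$ would then follow from the variational characterization combined with simplicity of $\lambda_1$: if $m_n\to m$ in $L^\infty(\Omega)$, the constraints depend continuously on the weight, the normalized eigenfunctions stay bounded in $H^s$, and any $L^2$ limit must coincide with the first eigenfunction by uniqueness. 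The main obstacle is the non-coercivity of $\scal{\lapneu{s}\cdot}{\cdot}$ caused by its constant kernel: bounding the constant part of minimizing sequences is only possible thanks to the interplay between $\into m<0$ and the normalization $\into mu^2=\pm 1$, exactly as exploited in the expansion above. Once this coercivity gap is bridged, the remainder is a routine application of compact-operator spectral theory.
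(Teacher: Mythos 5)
Your argument is correct, but it follows a genuinely different route from the paper's. The paper disposes of the kernel of the quadratic form by restricting to the zero-average subspace $V=\{u\in H^s(\Omega):\scal{u}{1}=0\}$, on which $\scal{\lapneu{s}\cdot}{\cdot}$ is an equivalent inner product; it then defines the compact self-adjoint operator $T$ by $\scal{Tu}{v}_V=\into muv$ and quotes the abstract spectral theory of de Figueiredo for the two sequences and their variational characterization, and Kato for continuous dependence on $m$. You instead work directly on all of $H^s(\Omega)$ with the constraint $\into mu^2=1$, and bridge the non-coercivity by expanding the constraint in $u=u_0+\tilde u$ and using $\into m<0$ to bound the constant component of a minimizing sequence; higher eigenvalues come from a Ljusternik--Schnirelmann minimax and the negative sequence from the constraint $\into mu^2=-1$ together with $\into mu=0$, exactly as in \eqref{def:la-1}. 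Your approach is more self-contained and makes transparent where hypothesis \eqref{ipom} enters (it is the same mechanism that yields the lower bound \eqref{basso}); the paper's is shorter because the abstract theory delivers the whole spectrum, its ordering, and the continuity statement in one stroke. Two small caveats on your side: the continuity-in-$m$ argument as written only covers $\lambda_1$ and its eigenfunction and leans on simplicity, which in the paper is established separately (Proposition \ref{prop:lambda1} via Lemma \ref{lem:parti_positive}), so you should either prove simplicity at this stage or phrase the continuity of the remaining eigenvalues via the minimax characterization; and the strict inequality $\lambda_1<\lambda_2$ in the statement is precisely that simplicity, which your construction does not yet yield.
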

\begin{proof}
The results for $s=1$ are standard, so we restrict to the case $0<s<1$.
First of all, the simple eigenvalue $\lambda_0=0$, with constant
eigenfunction, can be  computed directly. The other
eigenvalues can be obtained by restricting to the space
\[
V := \left\{u\in H^s(\Omega) \,:\, \scal{u}{1} = 0\right\}.
\]
Indeed, in this space we can use the equivalent scalar product
\[
\scal{u}{v}_V = \sum_{k=1}^{\infty} \mu_k^s u_k v_k
\]
and we have that the linear operator $T: V \to V$ defined by
\[
\scal{Tu}{v}_V = \int_\Omega muv
\]
is symmetric and compact, thanks to the compact embedding of
$H^s(\Omega)$ in $L^2(\Omega)$ (recall the definition of $H^s(\Omega)$ in \eqref{eq:defHs}).
As a consequence, we can apply standard results in spectral theory of self-adjoint compact
operators to obtain the existence and the variational characterization of the eigenvalues
(see e.g. \cite[Propositions 1.3, 1.10]{deFig}), as well as the continuity
property of the spectrum (see the book by Kato \cite{Kato}).
\end{proof}
\begin{remark}\label{rem:extension}
Alternatively, following \cite{cabresire,cast}, the above result can be obtained by means of an
extension problem in
${\mathcal C}:=\Omega\times(0,\infty)$.
Indeed, let $1-2s=:a\in(-1,1)$ and
$$
{\mathcal H}^{1;a}({\mathcal C}):=\left\{v=v_0+\tilde{v}:v_0\in\R,\,\int_{\mathcal C} y^a\left(|\nabla \tilde v|^2+\tilde{v}^2\right)\,dxdy<+\infty\right\}.
$$
It is known (see \cite{nek}) that, for $\partial\Omega$ sufficiently smooth, the elements of
$H^s(\Omega)$
coincide with the traces of functions in ${\mathcal H}^{1;a}({\mathcal C})$. As a consequence,
any $u\in H^s(\Omega)$ admits a unique extension
$v\in {\mathcal H}^{1;a}({\mathcal C})$ which
achieves
\begin{equation}\label{eq:min_extens}
\min\left\{\int_{\mathcal C} y^a|\nabla v|^2\,dxdy : v(x,0) = u(x)\right\}.
\end{equation}
Then \eqref{eq:eigen_u} is equivalent to
$$
\begin{cases}
\diverg(y^{a}\nabla v)=0 & \text{in }\mathcal C\\
\partial_\nu v = 0 & \text{on }\partial\Omega \times (0,\infty)\\
{D(s)}{d^{s}}
\partial_\nu^{a} v(x,0)= \lambda m(x)v(x,0)& \text{in }\Omega,
\end{cases}
$$
where the structural constant $D(s)$ is known to be
\[
{D(s)}=2^{2s-1}\frac{\Gamma(s)}{\Gamma(1-s)},
\]
so that one has the following characterization:
\begin{align}\label{chara}
\lambda_{1}(m,d,s)
=
d^{s}D(s)\min_{{\mathcal H}^{1;a}({\mathcal C})}\left\{\int_{\mathcal C}y^{a}|\nabla v|^{2}dxdy :
\into mv^{2}(x,0)dx=1
\right\}.
\end{align}
Note that the last formulation can be rewritten in terms of a suitable Rayleigh quotient.
\end{remark}

\begin{proposition}\label{prop:regularity}
Let $\psi$ be any eigenfunction of problem $\eqref{eq:eigen_u}$. Then
\[
\psi\in H^{2s}(\Omega).
\]
Furthermore, $\psi\in C^{0,\alpha}(\Omega)$ for every $\alpha < 2s$,
whenever $s\leq 1/2$, and $\psi\in C^{1,\alpha}(\Omega)$ for every $\alpha < 2s-1$,
in case $s>1/2$.
\end{proposition}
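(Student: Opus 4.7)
The argument proceeds in three steps, combining the spectral characterization of $H^s(\Omega)$ recalled in \eqref{eq:defHs} with the extension formulation described in Remark \ref{rem:extension}, and invoking the regularity theory available in \cite{cast,cabresire}. First, I would prove the $H^{2s}$-regularity purely by spectral means. Expanding $\psi=\psi_0+\sum_{k\ge 1}\psi_k\phi_k$ and pairing \eqref{eq:eigen_u} with $\phi_k$ gives $d^s\mu_k^s\psi_k=\lambda_1 (m\psi)_k$ for every $k\ge 1$. Since $m\in L^\infty(\Omega)$ and $\psi\in L^2(\Omega)$, the product $m\psi$ belongs to $L^2(\Omega)$, whence
\[
\sum_{k\ge 1}\mu_k^{2s}\psi_k^2\le C\,\|m\psi\|_{L^2(\Omega)}^2<+\infty,
\]
which by the definition \eqref{eq:defHs} (with $s$ replaced by $2s$) means exactly $\psi\in H^{2s}(\Omega)$.

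Second, in order to enter the H\"older regularity theory one needs $\lambda_1 m\psi\in L^\infty(\Omega)$, which reduces to $\psi\in L^\infty(\Omega)$. I would obtain this by a bootstrap: if $\psi\in L^p(\Omega)$, then $m\psi\in L^p(\Omega)$, and a Moser-type iteration in the extension cylinder $\mathC=\Omega\times(0,\infty)$ with the $A_2$-Muckenhoupt weight $y^a$, $a=1-2s\in(-1,1)$, gains integrability in finitely many steps, eventually giving $\psi\in L^\infty(\Omega)$ with a quantitative bound in terms of $\|\psi\|_{L^2}$, $\|m\|_{L^\infty}$ and $\lambda_1$. This is precisely the $L^\infty$-bound proved for the Neumann spectral fractional Laplacian in \cite{cast}, which I would cite rather than reprove.

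Third, once $\lambda_1 m\psi\in L^\infty(\Omega)$, I would pass to the extension $v$ of $\psi$ in $\mathC$, which by Remark \ref{rem:extension} solves
\[
\begin{cases}
\diverg(y^{a}\nabla v)=0 & \text{in }\mathC,\\
\partial_\nu v=0 & \text{on }\partial\Omega\times(0,\infty),\\
D(s)\,d^{s}\,\partial_\nu^{a} v(x,0)=\lambda_1 m(x)\psi(x) & \text{in }\Omega.
\end{cases}
\]
The interior and boundary regularity theory for such degenerate elliptic equations with $A_2$-weights, developed in \cite{cabresire} for the case of $\R^N$ and adapted to the bounded Neumann cylinder in \cite{cast}, gives H\"older continuity of $v$ up to $\{y=0\}$; taking the trace produces the announced regularity for $\psi$, namely $C^{0,\alpha}$ for every $\alpha<2s$ when $s\le 1/2$, and $C^{1,\alpha}$ for every $\alpha<2s-1$ when $s>1/2$.

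\textbf{Main obstacle.} The delicate point is the boundary regularity along the edge $\partial\Omega\times\{0\}$, where the homogeneous Neumann condition on the lateral boundary meets the nontrivial weighted co-normal condition on the bottom. Away from this edge the interior estimates of \cite{cabresire} apply directly, but at the edge one needs the localization and even-reflection arguments of \cite{cast}, which reduce the problem to a purely interior situation for an equation with the same $A_2$-weight and an $L^\infty$ inhomogeneity. My proof would simply invoke these results rather than reproduce them, the regularity statement being essentially an application of Steps 1--2 above to put the setting of \cite{cast,cabresire} into gear.
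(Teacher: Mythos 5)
Your proposal is correct and follows essentially the same route as the paper: the $H^{2s}$ regularity is obtained by exactly the same spectral computation ($m\psi\in L^2$ forces $\lapneu{s}\psi\in L^2$, hence $\sum_k\mu_k^{2s}\psi_k^2<\infty$), and the H\"older statement is obtained by a bootstrap to $L^\infty$ followed by the Caffarelli--Stinga regularity theory \cite[Theorem 1.5]{cast}, which is precisely what the paper invokes (your version merely spells out the extension-problem mechanism that the paper leaves implicit in the citation).
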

\begin{proof}
Since $m\in L^\infty(\Omega)$, equation \eqref{eq:eigen_u} implies that $\lapneu{s}
u \in L^2(\Omega)$, that is, recalling \eqref{not:muk}
\[
\sum_k (\mu_k^{s} u_k)^2 < +\infty;
\]
the Sobolev regularity follows by the definition of $H^{2s}(\Omega)$ given in \eqref{eq:defHs}. On the other hand, the H\"older regularity of the
eigenfunctions is a consequence of the regularity theory developed by
Caffarelli and Stinga \cite[Theorem 1.5]{cast}, and of a standard
bootstrap argument.
\end{proof}
Thanks to Proposition \ref{prop:lambdak} we have that there exists
a solution to the linear problem
\begin{equation}\label{eq:eigen_psi}
d^s \lapneu{s} \psi= \lambda_1(m,d,s) m \psi \qquad x\in\Omega,
\end{equation}
and now we  turn to the study of the properties of the first
eigenvalue $\lambda_1$ and the associated eigenfunction
$\psi$.
First of all, in order to show that  $\lambda_1$ is simple,
we will exploit the following lemma, which concerns a convexity property
of the $H^s(\Omega)$ semi-norm.
\begin{lemma}\label{lem:parti_positive}
Let $u\in H^s(\Omega)$, $0<s<1$. Then $u^\pm \in H^s(\Omega)$ and
\begin{equation}\label{eq:convex_seminorm}
 \scal{ \lapneu{s}u}{u} \geq
 \scal{ \lapneu{s}u^+}{u^+} + \scal{ \lapneu{s}u^-}{u^-},
\end{equation}
and the strict inequality holds whenever $u^\pm$ are both nontrivial.
\end{lemma}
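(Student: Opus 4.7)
My approach would be to reduce \eqref{eq:convex_seminorm} to a pointwise positivity statement by means of the Balakrishnan subordination formula for the spectral Neumann fractional Laplacian. Starting from the elementary identity $\mu^s = \frac{s}{\Gamma(1-s)}\int_0^\infty t^{-1-s}(1 - e^{-\mu t})\,dt$ for $\mu\geq 0$ and applying it term-by-term to the spectral decomposition \eqref{not:muk}, I would obtain
\begin{equation*}
\scal{\lapneu{s}u}{u} = \frac{s}{\Gamma(1-s)}\int_0^\infty t^{-1-s}\bigl[\|u\|_{L^2(\Omega)}^2 - \scal{e^{t\Delta_{\mathrm{N}}}u}{u}\bigr]\,dt.
\end{equation*}
Writing $e^{t\Delta_{\mathrm{N}}}$ through its (symmetric, strictly positive, mass-preserving) Neumann heat kernel $p_t(x,y)$ and symmetrizing, the bracketed quantity becomes $\tfrac{1}{2}\int_\Omega\int_\Omega p_t(x,y)(u(x)-u(y))^2\,dx\,dy$, reducing the question to a statement about the quadratic integrand alone.

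The heart of the argument is then the pointwise algebraic identity, which is valid precisely because $u^+u^-\equiv 0$:
\begin{equation*}
(u(x)-u(y))^2 = (u^+(x)-u^+(y))^2 + (u^-(x)-u^-(y))^2 + 2\bigl[u^+(x)u^-(y) + u^+(y)u^-(x)\bigr],
\end{equation*}
in which the bracketed correction is a sum of products of nonnegative quantities. Multiplying by $p_t>0$, integrating in $x,y$, and then integrating in $t$ against the positive weight $t^{-1-s}$ would yield simultaneously the membership $u^\pm\in H^s(\Omega)$ (using that finiteness of the $t$-integral is equivalent to membership in $H^s(\Omega)$) and the inequality \eqref{eq:convex_seminorm}. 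For the strict inequality, if both $u^\pm$ are nontrivial then $\{u^+>0\}$ and $\{u^->0\}$ each have positive Lebesgue measure, so the bracketed correction is strictly positive on a set of positive product measure in $\Omega\times\Omega$; combined with $p_t>0$ and $t^{-1-s}>0$, this forces the inequality to be strict.

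I do not anticipate substantial obstacles: the only technical chores are a routine application of Tonelli's theorem (legitimate since all integrands are nonnegative) and the verification of the subordination formula, both of which are standard consequences of the spectral decomposition combined with $u\in H^s(\Omega)$. The main advantage of this route over, say, splitting the harmonic extension of $u$ from Remark \ref{rem:extension} is that both the inequality and its strictness reduce to transparent pointwise facts and the strict positivity of $p_t$, with no appeal to subtler tools such as the strong maximum principle for the degenerate operator $\mathrm{div}(y^a\nabla\cdot)$.
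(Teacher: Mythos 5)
Your argument is correct, but it takes a genuinely different route from the paper's. The paper proves the lemma via the extension problem of Remark \ref{rem:extension}: if $v$ is the energy-minimizing extension of $u$ to $\mathcal{C}=\Omega\times(0,\infty)$, then $\int_{\mathcal C} y^a|\nabla v|^2=\int_{\mathcal C} y^a|\nabla v^+|^2+\int_{\mathcal C} y^a|\nabla v^-|^2$, and $v^\pm$ are admissible (generally non-optimal) competitors for the extension problems with traces $u^\pm$; strictness is then obtained from the strong maximum principle for $\diverg(y^a\nabla\cdot)$, since a genuine minimizer with nontrivial trace cannot vanish on an open subset of $\mathcal C$. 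Your subordination argument replaces all of this by the heat-semigroup representation plus a pointwise algebraic identity, and both the formula $\mu^s=\tfrac{s}{\Gamma(1-s)}\int_0^\infty t^{-1-s}(1-e^{-\mu t})\,dt$ and the expansion of $(u(x)-u(y))^2$ check out. Two points to make fully rigorous: (i) you implicitly use that $e^{t\Delta_{\mathrm{N}}}$ on a bounded Lipschitz (connected) domain is given by a symmetric, a.e.\ positive kernel $p_t$ with $\int_\Omega p_t(x,y)\,dy=1$; this is true (such domains are extension domains, so the Neumann semigroup is ultracontractive, conservative and irreducible), but it is a nontrivial input of roughly the same weight as the extension machinery it avoids, and connectedness of $\Omega$ is genuinely needed for positivity; (ii) for strictness you can bypass pointwise positivity of $p_t$ entirely by noting that your correction term equals $2\scal{e^{t\Delta_{\mathrm{N}}}u^+}{u^-}\ge 0$, which converges to $|\Omega|^{-1}\into u^+\into u^-\,>0$ as $t\to\infty$, hence is positive for $t$ large. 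As for what each approach buys: the paper's proof is very short given that the extension was already introduced for Proposition \ref{prop:lambdak}, whereas yours is self-contained at the spectral level, makes the strict inequality an elementary positivity statement rather than an appeal to a degenerate maximum principle, and transfers verbatim to the periodic and Laplace--Beltrami settings discussed in Section \ref{sec:mainres}.
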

\begin{remark}
The lemma enlightens a substantial difference between the nonlocal and the local case. Indeed, when $s=1$, the equality sign in \eqref{eq:convex_seminorm} always holds for any
$u$.  A similar result in the periodic case has been shown in
\cite[Proposition 3.1]{beroro}.
\end{remark}
\begin{proof}[Proof of Lemma \ref{lem:parti_positive}]
Let $v\in {\mathcal H}^{1;a}({\mathcal C})$ be the extension of $u$
given in \eqref{eq:min_extens}. Then
$v^\pm\in {\mathcal H}^{1;a}({\mathcal C})$ and, taking into account Remark \ref{rem:extension},
their traces $u^\pm$ belong to $H^s(\Omega)$. Therefore
\[
\begin{split}
\scal{ \lapneu{s}u}{u} = \int_{\mathcal C} y^a|\nabla v|^2\,dxdy &=
\int_{\mathcal C} y^a|\nabla v^+|^2\,dxdy  + \int_{\mathcal C} y^a|\nabla v^-|^2\,dxdy \\
&\geq \int_{\mathcal C} y^a|\nabla w_+|^2\,dxdy + \int_{\mathcal C} y^a|\nabla w_-|^2\,dxdy \\
&=\scal{ \lapneu{s}u^+}{u^+} + \scal{ \lapneu{s}u^-}{u^-},
\end{split}
\]
where $w_\pm$ solve the minimization problem \eqref{eq:min_extens} with traces $u^\pm$.

Finally, if $v^\pm$ are both nontrivial, the strong maximum principle
\cite[Remark 4.2.]{cabresire} implies that they cannot solve \eqref{eq:min_extens},
thus the strict inequality holds.
\end{proof}
\begin{proposition}\label{prop:lambda1}
The eigenvalue $\lambda_1(m,d,s)$ is simple, and the associated eigenfunction does not change sign.
Moreover the map
\[
\mathcal{M}\times\R^+\times(0,1]\ni (m,d,s) \mapsto \lambda_1
\]
is analytic.
\end{proposition}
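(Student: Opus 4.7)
The plan is to establish the three claims (fixed sign, simplicity, analyticity) in order, using Lemma \ref{lem:parti_positive}, the strong maximum principle, and Kato's perturbation theory.

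\textbf{Fixed sign of the first eigenfunction.} Let $\psi_1$ realize the minimum in \eqref{eq:def_eig}, so that $\int_\Omega m \psi_1^2 = 1$ and $\scal{\lapneu{s}\psi_1}{\psi_1} = \lambda_1/d^s$. I argue by contradiction, assuming both $\psi_1^\pm\not\equiv 0$. Since $\psi_1^+\psi_1^-\equiv 0$, the splitting $1=\alpha+\beta$ holds with $\alpha:=\int_\Omega m(\psi_1^+)^2$ and $\beta:=\int_\Omega m(\psi_1^-)^2$. For each sign, the inequality
\[
\scal{\lapneu{s}\psi_1^\pm}{\psi_1^\pm} \geq (\lambda_1/d^s)\int_\Omega m(\psi_1^\pm)^2
\]
holds: from \eqref{eq:def_eig} when the right-hand side is positive, and trivially otherwise (the left-hand side is nonnegative while the right-hand side is nonpositive). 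Combining with the strict splitting inequality of Lemma \ref{lem:parti_positive} one gets
\[
\lambda_1/d^s = \scal{\lapneu{s}\psi_1}{\psi_1} > \scal{\lapneu{s}\psi_1^+}{\psi_1^+} + \scal{\lapneu{s}\psi_1^-}{\psi_1^-} \geq (\lambda_1/d^s)(\alpha+\beta) = \lambda_1/d^s,
\]
a contradiction. Hence $\psi_1$ has constant sign; normalizing $\psi_1\geq 0$, the strong maximum principle applied to its nonnegative extension (Remark \ref{rem:extension}) forces $\psi_1>0$ in $\Omega$.

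\textbf{Simplicity.} Let $\tilde\psi_1$ be any further eigenfunction associated with $\lambda_1$; by the previous step we may take $\tilde\psi_1>0$ in $\Omega$. Fix $x_0\in\Omega$ and set $\alpha:=\psi_1(x_0)/\tilde\psi_1(x_0)$. Then $w:=\psi_1-\alpha\tilde\psi_1$ is an eigenfunction for $\lambda_1$ vanishing at the interior point $x_0$. Since any nontrivial eigenfunction for $\lambda_1$ has strict sign (by the step above combined with the maximum principle), we must have $w\equiv 0$, i.e.\ $\psi_1=\alpha\tilde\psi_1$.

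\textbf{Analyticity.} Following the scheme of Proposition \ref{prop:lambdak}, I realize $\lambda_1$ as the reciprocal of the largest eigenvalue of the compact self-adjoint operator $T(m,d,s)$ on the mean-zero space $V$, defined through $\scal{T u}{v}_V = d^{-s}\int_\Omega m u v$ (with the $H^s$-type inner product $\scal{\lapneu{s}\cdot}{\cdot}$ on $V$, or equivalently working in the fixed extension space of Remark \ref{rem:extension} with weight $y^{1-2s}$). The dependence on $m$ is linear bounded; dependence on $(d,s)$ is analytic via $d^s$ and the spectral weights $\mu_k^s$, with uniform convergence of the defining series on compact subsets of $\R^+\times(0,1]$. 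Since simplicity of $\lambda_1$ has been proved, the corresponding eigenvalue of $T$ is isolated and algebraically simple, and Kato's analytic perturbation theory for self-adjoint compact operators \cite{Kato} yields analyticity of $\lambda_1 = \lambda_1(m,d,s)$.

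The main technical obstacle is the last step: the functional space $H^s(\Omega)$ itself depends on $s$, so before invoking Kato one must reformulate the spectral problem on a parameter-independent Hilbert space (either $V\subset L^2(\Omega)$ with a parameter-dependent inner product, or the weighted extension cylinder of Remark \ref{rem:extension}) and verify the analytic dependence of the resulting compact operator family; the sign and simplicity arguments are routine once Lemma \ref{lem:parti_positive} is available.
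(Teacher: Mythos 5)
Your sign-change argument is essentially the paper's (the de Figueiredo scheme combined with the strict convexity inequality of Lemma \ref{lem:parti_positive}) and is correct. The genuine gap is in the simplicity step: you conclude $w=\psi_1-\alpha\tilde\psi_1\equiv 0$ from the claim that ``any nontrivial eigenfunction for $\lambda_1$ has strict sign'', but strict interior positivity is precisely what is \emph{not} available at this level of generality. The strong maximum principle in the extension cylinder $\mathcal{C}=\Omega\times(0,\infty)$ gives positivity of the extension only in the open cylinder; positivity of the trace on the bottom face $\{y=0\}$ requires a Hopf-type boundary statement, which (as the paper points out in the last remark of Appendix \ref{app:eig}) needs $m\in C^{0,\alpha}(\overline\Omega)$ and $\partial\Omega$ smooth --- for general $m\in L^\infty(\Omega)$ one cannot exclude interior zeros of the eigenfunction, especially for small $s$. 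Hence your $w$, which vanishes at $x_0$, need not vanish identically by your criterion. The standard repair avoids pointwise values: normalize both eigenfunctions to be nonnegative, set $c=\int_\Omega\psi_1/\int_\Omega\tilde\psi_1$ so that $w=\psi_1-c\tilde\psi_1$ has zero mean; if $w\not\equiv 0$ then $d^{s}\scal{\lapneu{s}w}{w}=\lambda_1\int_\Omega mw^2>0$ (it vanishes only for constants, and the only zero-mean constant is $0$), so $w$ is an admissible competitor in \eqref{eq:def_eig} after normalization and must be of one sign by your first step, contradicting $\int_\Omega w=0$.

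For analyticity you take a genuinely different route from the paper: you propose Kato's perturbation theory for a family of compact self-adjoint operators, whereas the paper applies the Implicit Function Theorem to $\mathcal{F}(m,d,s,u,\lambda)=\bigl(d^s\lapneu{s}u-\lambda mu,\ \int_\Omega mu^2-1\bigr)$ on the fixed space $H^{2(s^*-\sigma)}(\Omega)\times\R$ near a given $(m^*,d^*,s^*)$, checking invertibility of $\partial_{(u,\lambda)}\mathcal{F}$ by Fredholm's alternative plus simplicity. Your route can in principle be carried out, but the obstacle you yourself flag --- the $s$-dependence of the underlying Hilbert space and inner product, and joint analyticity in the Banach-space parameter $m$ (Kato's theory is stated for one scalar parameter; here one would have to argue via analyticity of the resolvent and the Riesz projection on a fixed reference space) --- is exactly what remains unresolved in your sketch, and it is what the paper's IFT formulation is designed to sidestep. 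As written, your last step is a plan rather than a proof.
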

\begin{proof}
The fact that $\lambda_1$ is simple, with one-signed eigenfunction,
can be deduced arguing as in \cite[Theorem 1.13]{deFig}, taking into account
Lemma \ref{lem:parti_positive}.

To prove the second part of the statement, let $(m^*,d^*,s^*)\in \mathcal{M}\times\R^+
\times(0,1]$ be fixed and $\lambda_1,\psi_1$ denote the first eigenvalue and
the non-negative, normalized first eigenfunction for the corresponding problem \eqref{eq:eigen_psi} with weight $m^*$, coefficient $d^*$ and exponent $s^*$.
If $\sigma>0$ is sufficiently
small we have that, by Propositions \ref{prop:lambdak} and \ref{prop:regularity}, the map
\[
\begin{array}{c}
\dys  \mathcal{F}\from \mathcal{M} \times \R^+ \times (s^*-\sigma,s^* + \sigma) \times H^{2(s^*-\sigma)}(\Omega)
\times \R^+ \to  L^2 (\Omega) \times \R,
\smallskip\\
\dys \mathcal{F}(m,d,s,u,\la)=\left(d^s\lapneu{s}u-\la m u,\into m u^{2}-1\right),
\end{array}
\]
is well defined, and
\[
\mathcal{F}(m^*,d^*,s^*,\psi_1,\la_1)=(0,0).
\]
In order to reach the conclusion, we are going to apply the Implicit Function Theorem to
$\mathcal{F}$, expressing the pair $(u,\lambda)$ as function of $(m,d,s)$. To this aim, computing the derivative one obtains
\[
\partial_{(u,\la)}\mathcal{F}(m^*,d^*,s^*,\psi_1,\la_1)[v,l]
=
\left(
\begin{array}{c}
\dys (d^*)^{s^*}\lapneu{s^*}v-\la_1 m^* v-l m^* \psi_1\smallskip\\
\dys 2\into m^* \psi_1 v
\end{array}
\right).
\]
By Fredholm's Alternative, it suffices to show that the linear
operator above is injective and this is a straightforward consequence of
\eqref{eq:def_eig} and of the fact that $\lambda_1$ is simple.
\end{proof}
As a direct consequence of Proposition \ref{prop:lambda1}
we have the following result.
\begin{corollary}
Let us denote with $\psi_1$ the non-negative, normalized eigenfunction corresponding to
$\lambda_1$. For every $0<s_1<s_2\leq 1$, the map
\[
F \from  \mathcal{M} \times \R^+ \times (s_1 , s_2] \to \R^+ \times H^{2s_1} (\Omega),
\qquad
F(m,d,s)=(\lambda_1, \psi_1)
\]
is analytic.
\end{corollary}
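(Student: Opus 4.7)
The plan is to revisit the Implicit Function Theorem argument already used in the proof of Proposition \ref{prop:lambda1}: there one obtains the analyticity of $\lambda_1$ as a byproduct, but in fact the theorem yields simultaneously the analyticity of the normalized eigenfunction. It therefore suffices to track the output of the IFT carefully and then to perform a (trivial) continuous embedding into the target space $H^{2s_1}(\Omega)$.

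Concretely, fix any reference point $(m^{*},d^{*},s^{*}) \in \mathcal{M} \times \R^{+} \times (s_{1},s_{2}]$ and associated first eigenpair $(\psi_{1},\lambda_{1})$. As in the proof of Proposition \ref{prop:lambda1}, choose $\sigma>0$ small enough that $(s^{*}-\sigma,s^{*}+\sigma) \subset (s_{1},s_{2}]$ (this is the only modification needed, and it is possible precisely because $s^{*}>s_{1}$), and consider the map
\[
\mathcal{F}\from \mathcal{M} \times \R^{+} \times (s^{*}-\sigma,s^{*}+\sigma) \times H^{2(s^{*}-\sigma)}(\Omega) \times \R^{+} \to L^{2}(\Omega) \times \R
\]
defined as in the proof of Proposition \ref{prop:lambda1}. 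I would first check that $\mathcal{F}$ is analytic in all of its arguments: linearity (hence analyticity) in $m$ and $\lambda$ is obvious; the dependence in $u$ is polynomial; and the dependence in $(d,s)$ enters through the multipliers $(d\mu_{k})^{s}$, which are real-analytic on $\R^{+}\times \R$, with estimates uniform enough on compact subsets of the parameter range (and for $u$ ranging in bounded sets of $H^{2(s^{*}-\sigma)}(\Omega)$) to guarantee convergence of the corresponding series in $L^{2}(\Omega)$.

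Next, I would recall from the proof of Proposition \ref{prop:lambda1} that the partial differential $\partial_{(u,\lambda)}\mathcal{F}(m^{*},d^{*},s^{*},\psi_{1},\lambda_{1})$ is an isomorphism, as a direct consequence of the simplicity of $\lambda_{1}$ and of Fredholm's alternative. The analytic Implicit Function Theorem (see e.g. Whittlesey's or Deimling's version in Banach spaces) then delivers an analytic map $(m,d,s) \mapsto (u(m,d,s),\lambda(m,d,s))$ with values in $H^{2(s^{*}-\sigma)}(\Omega) \times \R^{+}$, defined in a neighborhood of $(m^{*},d^{*},s^{*})$ and solving $\mathcal{F}=(0,0)$. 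By uniqueness of the nonnegative normalized first eigenfunction (Proposition \ref{prop:lambda1}), this local solution must coincide with $(\psi_{1}(m,d,s),\lambda_{1}(m,d,s))$ in a neighborhood of the reference point.

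Finally, since $s^{*}-\sigma > s_{1}$, the inclusion $H^{2(s^{*}-\sigma)}(\Omega) \hookrightarrow H^{2s_{1}}(\Omega)$ is linear and continuous, in particular analytic; composing, I obtain that $F=(\lambda_{1},\psi_{1})$ is analytic at $(m^{*},d^{*},s^{*})$ with values in $\R^{+}\times H^{2s_{1}}(\Omega)$. The reference point being arbitrary, $F$ is analytic on the whole of $\mathcal{M}\times\R^{+}\times(s_{1},s_{2}]$. The only genuine check in the argument is the analyticity of the spectral operator $(d,s) \mapsto d^{s}\lapneu{s}$ in the appropriate operator norm; this is the step to be most careful about, but it follows from the definition \eqref{eq:def_frac_lap} and termwise differentiation, exploiting that on $H^{2(s^{*}-\sigma)}(\Omega)$ the series is dominated by a geometric-type majorant uniformly in the parameters on compact subsets.
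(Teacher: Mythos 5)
Your argument is exactly the one the paper intends: the corollary is stated as a direct consequence of Proposition \ref{prop:lambda1}, whose proof already runs the Implicit Function Theorem on $\mathcal{F}$ and produces the pair $(u,\lambda)$ analytically with values in $H^{2(s^*-\sigma)}(\Omega)\times\R^+$, so your unpacking (choose $\sigma$ with $s^*-\sigma>s_1$, identify the IFT branch with the first eigenpair by simplicity, then use the continuous embedding $H^{2(s^*-\sigma)}(\Omega)\hookrightarrow H^{2s_1}(\Omega)$) matches the paper's route. The only cosmetic point is that at the endpoint $s^*=s_2$ one should take a one-sided neighborhood, but this does not affect the argument.
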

\begin{remark}
It is natural to wonder whether the eigenfunction corresponding to
$\lambda_1$ can be chosen to be strictly positive on $\overline{\Omega}$.
To obtain this result, one may invoke the strong maximum principle
\cite[Remark 4.2, Proposition 4.11]{cabresire}.
This requires more regularity, and the proof can be completed in case
$m\in C^{0,\alpha}(\overline{\Omega})$ (and $\partial\Omega$ is smooth) by
using \cite[Theorem 1.4]{cast}.
For a general $m\in L^\infty(\Omega)$, we can only deduce that the eigenfunction can
not vanish on a set with non-empty interior, but we can not exclude vanishing points, in
particular when $s$ is small.
\end{remark}


\section{Proofs of some auxiliary results.}\label{app:aux}
In this appendix we collect the proofs of Lemma
\ref{lem:curvetta} and \ref{lem:vtilde}.
\begin{proof}[Proof of Lemma \ref{lem:curvetta}]
Let us define $\gamma(t):=\psi_{\bar s}+(t-\bar s)w$, then
\[
u(t):=\dfrac{\gamma(t)}{\sqrt{\into m\gamma^{2}(t)}}
\]
satisfies all the requested properties.

Indeed, $u\in H^{\bar s+ \ep}(\Omega)$ by Proposition \ref{prop:regularity}, and \eqref{eq:psi_1} yields the first equality
in \eqref{equality:v}, while the third one holds by the definition of $u$.
Moreover, \eqref{eq:assumpt_curvetta} implies
\[
\dot u(\bar s)=
\frac{\dot\gamma(\bar s)}{\sqrt{\into m\gamma^{2}(\bar s)}}
-\frac{\gamma(\bar s)}{\left[\into m\gamma^{2}(\bar s)\right]^{3/2}}
\into m\gamma(\bar s)\dot\gamma(\bar s)
=w-\psi_{\bar s}\into m\psi_{\bar s}w=w.
\]
In addition, differentiating twice the last
equality in \eqref{equality:v} we have
\[
\into m\dot u(t)^{2}+\into mu(t)\ddot u(t)=0 \quad \forall t\in (\bar s-\ep,\bar s+ \ep),
\]
so that
$$
\into m\psi_{\bar s}\ddot u(\bar s)=-\into mw^{2}.
$$
This equality implies \eqref{equation:ddotu}
when taking as test function $\ddot u(\bar s)$ in \eqref{eq:psi_1}.
\end{proof}
\begin{proof}[Proof of Lemma \ref{lem:vtilde}]
Exploiting \eqref{def:lapneu}, the equation in \eqref{problem:tildev} rewrites as
\[
\sum_{k=1}^{\infty}(d\mu_{k})^s v_{k}\phi_k = \sum_{k=1}^{\infty}(d\mu_{k})^s \ln(d\mu_{k})a_{k}\phi_k,
\]
where $v_k$ and $a_k$ are the Fourier coefficients of $v$ and $\psi_s$ respectively.
Such problem is solved in $L^2(\Omega)$ by
\begin{equation}\label{eq:coeffdiv}
v_k= \ln(d\mu_{k})a_{k}, \qquad k\geq1,
\end{equation}
and any $v_0\in\R$. Moreover, Proposition \ref{prop:regularity} yields
\[
\sum_{k=1}^{\infty}(d\mu_{k})^{s'} v_{k}^2 =  \sum_{k=1}^{\infty}(d\mu_{k})^{s'} \ln^2(d\mu_{k}) a_{k}^2 \leq
C +  \sum_{k=1}^{\infty}(d\mu_{k})^{2s} a_{k}^2 < +\infty\,,
\]
so that $v\in H^{s'}(\Omega)$, for every $v_0\in \R$.
Finally, recalling \eqref{not:muk} and hypothesis
\eqref{ipom}, we have that
\[
\max_{v_0\in\R} \int_\Omega m(v_0+\tilde v)^2
\]
is uniquely achieved by
\[
v_0=-\dfrac{1}{m_0|\Omega|}\int_\Omega m \tilde v ,
\]
which also satisfies the second condition in \eqref{problem:tildev}.
\end{proof}


\end{document}